\definecolor{verylight}{gray}{0.97}
\definecolor{light}{gray}{0.93}
\definecolor{medium}{gray}{0.82}
 \def\NZQ{\Bbb}               
 \def\NN{{\NZQ N}}
 \def\frk{\frak}               
 \def\mm{{\frk m}}
 \def\opn#1#2{\def#1{\operatorname{#2}}} 
 \opn\chara{char} \opn\length{\ell} \opn\pd{pd} \opn\rk{rk}
 \opn\projdim{proj\,dim} \opn\injdim{inj\,dim} \opn\rank{rank}
 \opn\depth{depth} \opn\grade{grade} \opn\height{height}
 \opn\embdim{emb\,dim} \opn\codim{codim}
 \opn\Tr{Tr} \opn\bigrank{big\,rank}
 \opn\superheight{superheight}\opn\lcm{lcm}
 \opn\trdeg{tr\,deg}
 \opn\reg{reg} \opn\lreg{lreg} \opn\ini{in} \opn\lpd{lpd}
 \opn\size{size} \opn\sdepth{sdepth}
 \opn\link{link}\opn\fdepth{fdepth}\opn\lex{lex}
 \opn\div{div} \opn\Div{Div} \opn\cl{cl} \opn\Cl{Cl}
 \opn\Spec{Spec} \opn\Supp{Supp} \opn\supp{supp} \opn\Sing{Sing}
 \opn\Ass{Ass} \opn\Min{Min}\opn\Mon{Mon}
 \opn\Ann{Ann} \opn\Rad{Rad} \opn\Soc{Soc}
 \opn\Im{Im} \opn\Ker{Ker} \opn\Coker{Coker} \opn\Am{Am}
 \opn\Hom{Hom} \opn\Tor{Tor} \opn\Ext{Ext} \opn\End{End}
 \opn\Aut{Aut} \opn\id{id}
 \opn\nat{nat}
 \opn\pff{pf}
 \opn\Pf{Pf} \opn\GL{GL} \opn\SL{SL} \opn\mod{mod} \opn\ord{ord}
 \opn\Gin{Gin} \opn\Hilb{Hilb}\opn\sort{sort}
 \opn\aff{aff} \opn
\opn\relint{relint} \opn\st{st}
 \opn\lk{lk} \opn\cn{cn} \opn\core{core} \opn\vol{vol}
 \opn\link{link} \opn\star{star}\opn\lex{lex}\opn\set{set}\opn\purelex{purelex}
 \opn\dist{dist}
 \opn\gr{gr}
 \def\pot#1#2{#1[\kern-0.28ex[#2]\kern-0.28ex]}
 \opn\dirlim{\underrightarrow{\lim}}
 \opn\inivlim{\underleftarrow{\lim}}
 \def\Implies{\ifmmode\Longrightarrow \else
         \unskip${}\Longrightarrow{}$\ignorespaces\fi}
 \def\implies{\ifmmode\Rightarrow \else
         \unskip${}\Rightarrow{}$\ignorespaces\fi}
 \def\iff{\ifmmode\Longleftrightarrow \else
         \unskip${}\Longleftrightarrow{}$\ignorespaces\fi}
 \newtheorem{Theorem}{Theorem}[section]
 \newtheorem{Lemma}[Theorem]{Lemma}
 \newtheorem{Corollary}[Theorem]{Corollary}
 \newtheorem{Proposition}[Theorem]{Proposition}
 \newtheorem{Remark}[Theorem]{Remark}
 \newtheorem{Example}[Theorem]{Example}
 \newtheorem{Definition}[Theorem]{Definition}
 \let\epsilon\varepsilon
 \let\kappa=\varkappa
 \def\qed{\ifhmode\textqed\fi
       \ifmmode\ifinner\quad\qedsymbol\else\dispqed\fi\fi}
 \def\textqed{\unskip\nobreak\penalty50
        \hskip2em\hbox{}\nobreak\hfil\qedsymbol
        \parfillskip=0pt \finalhyphendemerits=0}
 \def\dispqed{\rlap{\qquad\qedsymbol}}
 \opn\dis{dis}
 \def\pnt{{\raise0.5mm\hbox{\large\bf.}}}
 \opn\Lex{Lex}
\begin{document}

\title{ Ideals with linear quotients and componentwise polymatroidal ideals}

\author{Somayeh Bandari}
\address{Imam Khomeini International University-Buein Zahra Higher Education Center of Engineering and Technology, Qazvin, Iran}
\email{somayeh.bandari@yahoo.com and s.bandari@bzte.ac.ir}

\author{ Ayesha Asloob Qureshi}
\address{Sabanci University, Faculty of Engineering and Natural Sciences, Orta Mahalle, Tuzla 34956, Istanbul,
Turkey}
\email{aqureshi@sabanciuniv.edu}

\subjclass[2010]{13C13,05E40}

\keywords{polymatroidal ideals; componentwise polymatroidal ideals; strong exchange property; linear quotients}

\maketitle

\begin{abstract}
If $I$ is a monomial ideal with linear quotients, then it has
 componentwise linear quotients.
However, the converse of this statement is an open question. In this
paper, we provide two classes of ideals for which the converse of this
statement holds. First class is the componentwise polymatroidal
ideals in $K[x,y]$ and the second one is the componentwise
polymatroidal ideals with strong exchange property.
\end{abstract}

\section*{Introduction}

Let $S=K[x_1,\ldots,x_n]$ be the polynomial
ring in $n$ indeterminates over an arbitrary field $K$. For a monomial ideal $I\subset S$,
 we denote the unique minimal set of monomial generators of  $I$ by $G(I)$. For a monomial $u=x_1^{a_1}\cdots x_n^{a_n}\in S$,
  we set $\deg_{x_i}(u):=a_i$ for all $i=1, \ldots, n$.
 Let $I$ be generated in a single degree. Then $I$ is said to be {\em polymatroidal},
 if for any $u,v\in G(I)$ with $\deg_{x_i}(u)> \deg_{x_i}(v)$,
there exists an index $j$ with $\deg_{x_j}(u)< \deg_{x_j}(v)$ such
that $x_j(u/x_i)\in I$. Equivalently,  a monomial ideal $I$ in $S$ is called a
 {\em polymatroidal ideal}, if there exists a set of bases $\mathcal{B}\subset\mathbb{Z}^n$ of a polymatroid of rank $d$, such that
$\mathcal{G}(I)=\{{\bf x}^{\bf a} : {\bf a} \in \mathcal{B}\}$. The polymatroidal ideals were introduced as a
 non-squarefree analogue of matroidal ideals, and they have been discussed in various articles with both algebraic and
 combinatorial points of view. An overview of discrete polymatroids
 and polymatroidal ideals is provided in \cite[Chapter 12]{HHbook}. The most distinguished properties of polymatroidal ideals are: product of polymatroidal
 ideals is again polymatroidal, and polymatroidal ideals have linear resolutions. The definition of polymatroidal ideals was
 generalized in \cite{BH} by removing the restriction on the degrees of generators. Let $I\subset S$ be a monomial ideal.
 Then $I$ is called a {\em componentwise polymatroidal} if $I_{\langle j\rangle}$ is
polymatroidal for all $j$, where $I_{\langle j\rangle}$ is the
$j$-th graded component of $I$.

A monomial ideal $I\subset S$ is said to have {\em linear
quotients}, if there exists an ordering $u_1,\ldots,u_m$ of its
minimal generators such that for each $i=2,\ldots,m$, the colon
ideal $(u_1,\ldots,u_{i-1}):u_i$ is generated by a subset of the
variables. Any such ordering of $G(I)$ for which $I$ has linear
quotients is called an {\em admissible order}. The ideals with
linear quotients were introduced by Herzog and Takayama in
\cite{HT}. In  \cite[Theorem 2.7]{SZ}, Soleyman Jahan and Zheng
proved that if a monomial ideal $I$ has linear quotients, then it
has componentwise linear quotients, which means that each graded
component of $I$ has linear quotients. They raised the converse of this statement as an open question.\\
{\bf Question 1:} If a monomial ideal has componentwise linear quotients, then does it admit linear quotients?\\
 The polymatroidal ideals have linear quotients, see \cite[Lemma
1.3]{HT}, and hence
 componentwise polymatroidal ideals admit componentwise linear quotients.
 Question 1 still remains open even for the class of  componentwise polymatroidal ideals.
 First author and Herzog in \cite[Corollary 3.7]{BH} proved that the
componentwise of Veronese type ideals have linear quotients, which
is a class of componentwise polymatroidal ideals. The admissible
order introduced in \cite{BH} is very special and cannot be
generalized to arbitrary componentwise polymatroidal ideals. In this
paper, our motivation is to continue the work done in \cite{BH}.
 We give an affirmative answer to Question 1, for two different classes of componentwise polymatroidal ideals.

A breakdown of the contents of this paper is as follows: in Section~\ref{Sec1},
we discuss certain exchange properties for monomial ideals and in Proposition~\ref{exchange}, we characterize
componentwise polymatroidal ideals by slightly generalizing the notion of non-pure exchange property.
The ideals with  non-pure exchange property were introduced in \cite{BH}. We introduce the ideals
 with non-pure dual exchange property, see Definition~\ref{nonpuredual}. In Proposition~\ref{com},
 we show that the componentwise polymatroidal ideals satisfy non-pure dual exchange property. Moreover, monomial ideals
generated in a single degree with non-pure dual exchange property
are exactly the polymatroidal ideals, see Proposition \ref{non}.

 In Section~\ref{Sec2}, we consider componentwise polymatroidal ideals generated in two variables.
  In Proposition \ref{equal}, we prove that the concept of non-pure dual exchange property,
    non-pure exchange property, and componentwise polymatroidal coincide for monomial ideals in $K[x,y]$.
  Such a statement does not hold if we increase the number of variables to three. The first main
  result of Section \ref{Sec2} is Corollary~\ref{cortight}, in which we give a complete description of
   componentwise polymatroidal ideals in $K[x,y]$. For this we recall the notion of $x$-tight and $y$-tight
   ideals from \cite{Qu}. Our next main result
  is Theorem~\ref{2quotient} which states that the componentwise polymatroidal ideals in $K[x,y]$ have linear quotients.
   The admissible order introduced in Theorem~\ref{2quotient} cannot be extended for the ideals generated in three variables.
   In general, the powers of componentwise polymatroidal ideals need not to be componentwise polymatroidal, as noted in \cite{BH}.
   However, with the help of Corollary~\ref{cortight}, we establish that the product of componentwise polymatroidal ideals in
    $K[x,y]$ is again componentwise polymatroidal ideal, see Corollary \ref{corpro}.

In  Section~\ref{Sec3}, we give another class of componentwise
polymatroidal ideals with linear quotients. Indeed, we prove that
the componentwise polymatroidal ideals with strong exchange property
have linear quotients, see Theorem \ref{main}.

\section{Componentwise polymatroidal ideals and certain exchange properties}\label{Sec1}

Let $I\subset S=K[x_1,\ldots,x_n]$ be a monomial ideal generated in
a single degree. Then $I$ is said to be {\em polymatroidal}, if for
any two monomials $u,v\in G(I)$ with $\deg_{x_i}(u)> \deg_{x_i}(v)$,
there exists an index $j$ with $\deg_{x_j}(u)< \deg_{x_j}(v)$ such
that $x_j(u/x_i)\in I$. In other words, the generators of
polymatroidal ideals satisfy the so called ``exchange property".
 Let $I\subset S$ be a monomial ideal.
 Then $I$ is called a {\em componentwise polymatroidal} if $I_{\langle j\rangle}$ is polymatroidal for all $j$, where $I_{\langle j \rangle}$ is the
$j$-th graded component of $I$. It is natural to ask whether
componentwise polymatroidal can also be characterized in terms of a
 certain exchange property. In \cite{BH}, the term of ideals with non-pure exchange property was introduced as follows:

\begin{Definition}\cite[Definition 3.4]{BH}
Let $I\subset S$ be a monomial ideal. Then $I$ is said to satisfy
{\em non-pure exchange property} if, for all $u,v \in G(I)$
  with $\deg(u)\leq \deg(v)$ and for all $i$ such that $\deg_{x_i}(v) > \deg_{x_i} (u)$, there exists $j$ such that
   $\deg_{x_j}(v)< \deg_{x_j}(u)$ and $x_j(v/x_i) \in I$.
 \end{Definition}

From \cite[Proposition 3.5]{BH}, it follows that componentwise
polymatroidal ideals satisfy non-pure exchange property.
 However, it is also noted in \cite[page 762]{BH} that ideals with non-pure exchange property are not necessarily componentwise polymatroidal.
  The following proposition shows that the componentwise polymatroidal ideals can be characterized in terms of a certain exchange property
   by slightly modifying the definition of non-pure exchange property.

\begin{Proposition}\label{exchange}
Let $I \subset S=K[x_1, \ldots, x_n]$ be a monomial ideal. Then the
following statements are equivalent:
\begin{enumerate}
\item $I$ is componentwise polymatroidal.
\item  If $u,v \in I$ with $\deg(u)\leq \deg(v)$ and $u \nmid v$, then for all $i$
with $\deg_{x_i}(v) > \deg_{x_i} (u)$, there exists $j$ such
that $\deg_{x_j}(v)< \deg_{x_j}(u)$ and $x_j(v/x_i) \in I$.
\end{enumerate}
\end{Proposition}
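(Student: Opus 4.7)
The plan is to prove the two implications separately, with (2)$\Rightarrow$(1) being a bookkeeping exercise and (1)$\Rightarrow$(2) requiring a careful ``lift'' of $u$ to the degree of $v$.

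For (2)$\Rightarrow$(1), fix $j$ and pick $u,v \in G(I_{\langle j \rangle})$ with $\deg_{x_i}(u) > \deg_{x_i}(v)$. Since $u$ and $v$ have the same degree, one divides the other iff they are equal, so in particular $v \nmid u$. I would then apply hypothesis (2) with the roles of $u$ and $v$ reversed: this yields an index $j'$ with $\deg_{x_{j'}}(u) < \deg_{x_{j'}}(v)$ and $x_{j'}(u/x_i) \in I$. As this monomial has degree $j$, it lies in $I_{\langle j \rangle}$, which is precisely the polymatroidal exchange.

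For (1)$\Rightarrow$(2), take $u,v \in I$ with $\deg(u) \leq \deg(v)$, $u \nmid v$, and an index $i$ with $\deg_{x_i}(v) > \deg_{x_i}(u)$. The idea is to build an auxiliary monomial $u' \in I$ of degree $\deg(v)$ that I can compare to $v$ in $I_{\langle \deg(v) \rangle}$. Since $u \nmid v$, the set $A = \{k : \deg_{x_k}(u) > \deg_{x_k}(v)\}$ is non-empty, so I pick any monomial $w$ of degree $\deg(v) - \deg(u)$ supported on $A$ and set $u' := uw$. Then $u' \in I$, $\deg(u') = \deg(v)$, and $\deg_{x_i}(u') = \deg_{x_i}(u) < \deg_{x_i}(v)$ because $i \notin A$. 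Since two distinct monomials of the same degree never divide each other, $G(I_{\langle \deg(v)\rangle})$ is exactly the set of degree $\deg(v)$ monomials of $I$, so both $u'$ and $v$ belong to it. The polymatroidal property of $I_{\langle \deg(v)\rangle}$ then produces an index $j$ with $\deg_{x_j}(v) < \deg_{x_j}(u')$ and $x_j(v/x_i) \in I$.

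The only delicate point, and where the choice of $w$ pays off, is upgrading the inequality $\deg_{x_j}(v) < \deg_{x_j}(u')$ to $\deg_{x_j}(v) < \deg_{x_j}(u)$. This is immediate from the support constraint on $w$: if $j \notin \supp(w)$, then $\deg_{x_j}(u') = \deg_{x_j}(u)$ and we are done; while if $j \in \supp(w) \subseteq A$ then already $\deg_{x_j}(u) > \deg_{x_j}(v)$ by definition of $A$. I do not expect any serious obstacle beyond this case split, since the remaining ingredients (the coincidence of $G(I_{\langle j\rangle})$ with the degree-$j$ part of $I$, and the nonemptiness of $A$ under $u \nmid v$) are immediate.
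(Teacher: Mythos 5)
Your proof is correct and takes essentially the same route as the paper. For (1)$\Rightarrow$(2) the paper also lifts $u$ to degree $\deg(v)$, using the single-variable lift $w=x_k^{t-r}u$ for one chosen $k$ with $\deg_{x_k}(u)>\deg_{x_k}(v)$, whereas you allow $w$ to be any monomial supported on the full set $A$; this is a cosmetic generalization, and your explicit case split ($j\notin\supp(w)$ versus $j\in\supp(w)\subseteq A$) is exactly the detail the paper leaves implicit when it declares the resulting $j$ ``as required.''
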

\begin{proof}
(2) $\implies$ (1): It follows  immediately from the definition of
componentwise polymatroidal ideals.

(1) $\implies$ (2): Let $I$ be a componentwise polymatroidal, and
$u,v \in I$ with $\deg(u)\leq \deg(v)$ and
 $u \nmid v$. Let $\deg_{x_i}(v) > \deg_{x_i} (u)$ for some $1 \leq i \leq n$. We need to show that there exists $j$
 such that $\deg_{x_j}(v)< \deg_{x_j}(u)$ and $x_j(v/x_i) \in I$.
Let $r=\deg(u)$ and $ \deg(v)=t$. If $r=t$, then $u,v \in I_{\langle
r \rangle}$, and the assertion
 follows directly from the definition of componentwise polymatroidal ideals.
 Now, let $r<t$. Since $u \nmid v$, it follows that  there exists some $k$ such that
   $\deg_{x_k}(v)< \deg_{x_k}(u)$. Set $w:=x_k^{t-r}u$. This gives $w,v \in I_{\langle t \rangle}$,
   and $\deg_{x_i}(v) > \deg_{x_i} (w)=\deg_{x_i} (u)$.  Again, by following the definition of componentwise polymatroidal ideal,
    we obtain some $j$ such that $\deg_{x_j}(v)< \deg_{x_j}(w)$ and $x_j(v/x_i) \in I$, as required.
\end{proof}

In \cite[Lemma 3.1]{HH} authors proved that there exists a ``dual
version" of exchange property for polymatroidal ideals.
 More precisely, they proved that if $I$ is a polymatroidal ideal, then for any monomials $u = x_1^{a_1}\cdots x_n^{a_n}$
 and $v = x^{b_1}\cdots x_n^{b_n} $ in $G(I)$ and for each $i$ with $a_i <b_i,$ one has $j$ with $a_j>b_j$ such that $x_iu/x_j \in G(I)$. We
remark here that such a dual version does not hold for ideals with
non-pure exchange property.
  For example, take $I=(x_1x_3x_4, x_1x_4x_5,x_1x_2x_4,x_1x_2^2x_6,x_3x_4^2x_5^2)$
  and $u=x_3x_4^2x_5^2$ and $v=x_1x_2^2x_6$. Then $\deg_{x_6} (u) < \deg_{x_6} (v)$, but $x_6
u/x_i \notin I$ for any $i=3,4,5$. Motivated by this, we define the
following:

\begin{Definition}\label{nonpuredual}
Let $I\subset S$ be a monomial ideal. We say that $I$ satisfies {\em
non-pure dual exchange property} if, for all $u,v \in G(I)$ with
$\deg(u) \leq \deg(v)$ and for all $i$ such that $\deg_{x_i} (v) <
\deg_{x_i} (u)$, there exists $j$ such that $\deg_{x_j} (v) >
\deg_{x_j} (u)$ and $x_i(v/x_j) \in I$.
\end{Definition}

Note that non-pure dual exchange property and non-pure exchange
property do not imply each other. For example, $I=(x_1x_3x_4,
x_1x_4x_5,x_1x_2x_4, x_1x_2^2x_6,x_3x_4^2x_5^2)$ satisfies non-pure
exchange property,
 but it does not satisfy non-pure dual exchange property.
 On the other hand, let
$I=(x_1x_2,x_1x_4^2,x_2x_4^2,x_3x_4^2)$. Then $I$ satisfies non-pure
dual exchange property, but it does not satisfy non-pure exchange
property. The ideals with non-pure dual exchange property behave
slightly better
 than the ideals with non-pure exchange property. We begin by first proving the following:

\begin{Proposition}\label{non}
Let $I\subset S$ be a monomial ideal generated in a single degree.
Then $I$ is polymatroidal if and only if $I$ has  non-pure dual
exchange property.
\end{Proposition}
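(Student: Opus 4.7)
The plan is to handle the two implications separately. The forward direction ($\Rightarrow$) is essentially immediate: if $I$ is polymatroidal and generated in a single degree, then $\deg(u)=\deg(v)$ for all $u,v \in G(I)$, so the hypothesis $\deg(u)\leq \deg(v)$ of Definition \ref{nonpuredual} is automatic, and the remaining conclusion is precisely the dual version of the exchange property established in \cite[Lemma~3.1]{HH} (with the roles of $u$ and $v$ interchanged, and noting $G(I)\subset I$).

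For the converse ($\Leftarrow$), assume $I$ is generated in a single degree and satisfies non-pure dual exchange; I want to prove the exchange property. Fix $u,v \in G(I)$ and $i$ with $\deg_{x_i}(u) > \deg_{x_i}(v)$; we must produce $j$ with $\deg_{x_j}(u) < \deg_{x_j}(v)$ and $x_j u / x_i \in I$. I will induct on
\[
D(u,v) := \sum_{k=1}^n \bigl(\deg_{x_k}(u) - \deg_{x_k}(v)\bigr)^+.
\]
Since $\deg(u)=\deg(v)$, $D$ equals the sum of negative differences as well, and $D\geq 1$ because of the excess at $i$. In the base case $D=1$, single-degree forces $u$ and $v$ to differ in exactly two coordinates: $i$ with $\deg_{x_i}(u)=\deg_{x_i}(v)+1$, and some $j$ with $\deg_{x_j}(u)=\deg_{x_j}(v)-1$; then $x_j u / x_i = v \in I$, which gives the required exchange.

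For the inductive step ($D\geq 2$), choose a coordinate $i^*$ as follows: if $\deg_{x_i}(u) - \deg_{x_i}(v)\geq 2$, set $i^*=i$; otherwise ($\deg_{x_i}(u) - \deg_{x_i}(v)=1$, so since $D\geq 2$ some other excess coordinate exists) pick any $i^* \neq i$ with $\deg_{x_{i^*}}(u) > \deg_{x_{i^*}}(v)$. Apply non-pure dual exchange to $(u,v,i^*)$ to obtain $j^*$ with $\deg_{x_{j^*}}(v) > \deg_{x_{j^*}}(u)$ and $w := x_{i^*} v / x_{j^*} \in G(I)$. A direct check (using $j^* \neq i$, which follows from the excess/deficiency directions at $i$ and $j^*$) shows $\deg_{x_i}(u) > \deg_{x_i}(w)$, while a bookkeeping at coordinates $i^*$ and $j^*$ gives $D(u,w) = D(u,v)-1$. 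The induction hypothesis yields $k$ with $\deg_{x_k}(u) < \deg_{x_k}(w)$ and $x_k u / x_i \in I$; a short case analysis ($k = i^*$ is ruled out by the excess at $i^*$; $k=j^*$ gives $\deg_{x_k}(u) < \deg_{x_k}(v)-1$; otherwise $\deg_{x_k}(w) = \deg_{x_k}(v)$) transfers the inequality to $\deg_{x_k}(u) < \deg_{x_k}(v)$, as required.

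The main obstacle is the choice of $i^*$. Applying dual exchange at $i$ itself is only safe when the excess at $i$ is at least $2$; otherwise the modified monomial $w$ equals $u$ at coordinate $i$, and the inductive hypothesis is not available for $(u,w,i)$. Routing the reduction through another excess coordinate sidesteps this degeneracy while still strictly decreasing $D$, and everything else is a straightforward verification.
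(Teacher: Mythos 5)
Your proof is correct, and it rests on the same underlying idea as the paper's argument: reduce a ``distance'' between the two generators step by step using the non-pure dual exchange property. Indeed, the paper works with $\dist(u,v)=\tfrac12\sum_q|a_q-b_q|$, which coincides with your $D(u,v)$ because $\deg u=\deg v$. The execution, however, is genuinely different. The paper modifies the generator with the smaller $x_i$-exponent by repeatedly raising a deficient coordinate $k\neq i$ and lowering an excess one, so the $i$-th coordinate of the modified monomial stays pinned; the iteration stops at a monomial $w$ that dominates the other generator at every coordinate except $i$, and the conclusion then comes from one last (unwritten) application of dual exchange to $v$ and $w$. You instead set up a formal induction on $D$ with an explicit $D=1$ base case, you \emph{allow} coordinate $i$ itself to be raised whenever the excess there is at least two (which is exactly what preserves the strict inequality $\deg_{x_i}(u)>\deg_{x_i}(w)$ needed to invoke the inductive hypothesis), and you transport the index $k$ returned by the hypothesis back to the original pair by the case split on $k\in\{i^*,j^*,\text{other}\}$. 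That case split is where your work lies; the paper's work lies in analyzing the terminal monomial $w$. Your version is a bit more self-contained: the base case is immediate, no closing dual-exchange step is needed, and incidentally you sidestep the paper's over-strong assertion that $a_{j_0}=q_{j_0}$ for the terminal $w$ (which can fail, e.g.\ for $u=x_3^4$, $v=x_1^2x_2x_3$, $i=1$; only $a_{j_0}\geq q_{j_0}>b_{j_0}$ is guaranteed, and that already suffices for the paper's argument).
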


\begin{proof}
The ``Only if" part follows from \cite[Lemma 3.1]{HH}.

To prove the converse, assume that $I$ is a monomial ideal with
non-pure dual exchange property.  We introduce the distance of
$u=x_1^{a_1}\cdots x_n^{a_n}\in G(I)$ and $v=x_1^{b_1}\cdots
x_n^{b_n}\in G(I)$  by setting $\dist(u,v):=1/2\sum_{q=1}^n
|a_q-b_q|$. Fix $i$ with $a_i<b_i$. If there exists $k_1\neq i$ with
$a_{k_1}<b_{k_1}$, then there exists $l_1$ with $a_{l_1}>b_{l_1}$
such that $w_1=x_{k_1}u/x_{l_1}\in G(I)$. Let $w_1=x_1^{c_1}\cdots
x_n^{c_n}$. Then $c_i=a_i$ and $\dist(w_1,v)<\dist(u,v)$. Again, if
there exists $k_2\neq i$ with $c_{k_2}<b_{k_2}$, then there exists
$l_2$ with $c_{l_2}>b_{l_2}$  such that $w_2=x_{k_2}w_1/x_{l_2}\in
G(I)$. Let $w_2=x_1^{d_1}\cdots x_n^{d_n}$. Then $d_i=a_i$ and
$\dist(w_2,v)<\dist(w_1,v)$. Repeating this procedure yields
$w=x_1^{q_1}\cdots x_n^{q_n}\in G(I)$ with $q_i=a_i<b_i$ and
$q_j\geq b_j$ for all $j\neq i$. One has $j_0\neq i$ with
$a_{j_0}=q_{j_0}>b_{j_0}$. Then $x_{j_0}v/x_i\in G(I)$, as desired.
\end{proof}

In case of componentwise polymatroidal ideals, we have the following:

\begin{Proposition} \label{com}
The componentwise polymatroidal ideals satisfy the non-pure dual
exchange property.
\end{Proposition}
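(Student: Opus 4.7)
The natural strategy is to reduce the claim to the classical dual exchange property for polymatroidal ideals (\cite[Lemma 3.1]{HH}), applied inside the appropriate graded component $I_{\langle t \rangle}$ of $I$.

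Take $u, v \in G(I)$ with $r := \deg(u) \leq \deg(v) =: t$, and fix $i$ with $\deg_{x_i}(v) < \deg_{x_i}(u)$. If $r = t$, then $u, v \in G(I_{\langle t \rangle})$, and since $I_{\langle t \rangle}$ is polymatroidal by hypothesis, the cited lemma immediately produces the required index $j$ and the membership $x_i(v/x_j) \in I$.

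If $r < t$, the idea is to promote $u$ to a monomial of degree $t$ that retains the same $x_i$-exponent, and then run the classical dual exchange there. Note first that $n \geq 2$ (otherwise $\deg(u) \leq \deg(v)$ together with $\deg_{x_1}(v) < \deg_{x_1}(u)$ would be contradictory), so one may pick some $j_0 \neq i$ and put $w := x_{j_0}^{t-r} u$. Then $w$ is a degree-$t$ monomial in $I$, hence lies in $I_{\langle t \rangle}$, and because $I_{\langle t \rangle}$ is generated in the single degree $t$, in fact $w \in G(I_{\langle t \rangle})$. Since $\deg_{x_i}(w) = \deg_{x_i}(u) > \deg_{x_i}(v)$, applying \cite[Lemma 3.1]{HH} to $v$ and $w$ in $G(I_{\langle t \rangle})$ at the index $i$ yields some $j$ with $\deg_{x_j}(v) > \deg_{x_j}(w)$ and $x_i(v/x_j) \in G(I_{\langle t \rangle}) \subset I$. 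Because multiplication by $x_{j_0}^{t-r}$ can only increase the $x_{j_0}$-exponent, one has $\deg_{x_j}(w) \geq \deg_{x_j}(u)$ for every $j$, and therefore $\deg_{x_j}(v) > \deg_{x_j}(u)$, as required.

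No substantial obstacle is anticipated. The only subtle point is the choice $j_0 \neq i$, which guarantees that the $x_i$-exponent is unchanged when passing from $u$ to $w$ and hence that the hypothesis of \cite[Lemma 3.1]{HH} can be invoked at index $i$; everything else is routine bookkeeping.
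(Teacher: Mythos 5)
Your proof is correct and uses essentially the same approach as the paper: lift $u$ to a degree-$t$ monomial $w\in G(I_{\langle t\rangle})$ preserving $\deg_{x_i}(w)>\deg_{x_i}(v)$, apply \cite[Lemma 3.1]{HH} in the polymatroidal component $I_{\langle t\rangle}$, and check the resulting index $j$ also works against $u$. The only cosmetic difference is the choice of multiplier: the paper takes $w=u\,x_i^{t-r}$ and then observes the resulting $j$ must differ from $i$, while you take $w=u\,x_{j_0}^{t-r}$ with $j_0\neq i$ and note $\deg_{x_j}(w)\geq\deg_{x_j}(u)$ for all $j$; both variants are equally valid.
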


\begin{proof}
Let $I$ be a componentwise polymatroidal ideal. Then by  \cite[Lemma
3.1]{HH}, $I$ satisfies non-pure dual exchange property for each
$I_{\langle a \rangle}$. Let $u,v \in G(I)$. Then the assertion is
clear when $\deg(u) = \deg(v)$.

Let $\deg(u) < \deg(v)=t$ and $i$ be such that $\deg_{x_i} (v) <
\deg_{x_i} (u)$. Let $b$ be an integer
 such that $\deg (v)=\deg(ux_i^b)$. Note that $\deg_{x_i} (v)< \deg_{x_i}(ux_i^b)$ and $v, ux_i^b \in I_{\langle t \rangle}$.
  Then again by using  \cite[Lemma 3.1]{HH} we see that $x_i(v/x_j) \in I$, for some $j$ with $\deg_{x_j} (v) >\deg_{x_j} (ux_i^b)$.
  Moreover, $\deg_{x_j} (ux_i^b)=\deg_{x_j} (u)$, for all $j \neq i$, so we conclude that
   $x_i (v/x_j) \in I$, for some $j$ with $\deg_{x_j} (v) >\deg_{x_j} (u)$, as required.
\end{proof}

It is natural to ask whether the converse of Proposition \ref{com}
holds true. In the following example, we give a negative answer to
this.
\begin{Example}
Let $I=(x_1x_2,x_1x_4^2,x_2x_4^2,x_3x_4^2)$. Then $I$ has non-pure
dual exchange property, but $I_{\langle 3 \rangle}$ is not polymatroidal. Indeed, we have
$x_1^2x_2\in G(I_{\langle 3 \rangle})$ and $x_3x_4^2\in G(I_{\langle 3 \rangle})$, but
$x_1x_3x_4\notin G(I_{\langle 3 \rangle})$ and  $x_2x_3x_4\notin G(I_{\langle 3 \rangle})$. In
particular, it follows by Proposition \ref{non} that $I_{\langle 3 \rangle}$ does
not have non-pure dual exchange property.
\end{Example}

We summarize  Section~\ref{Sec1} as follows. The non-pure exchange
property and the non-pure dual exchange property do not imply each
other, and an ideal admitting any of these properties need not to be
componentwise polymatroidal. On the other hand, if $I$ is a monomial
ideal generated in a single degree, then $I$ admits non-pure
exchange property if and only if it admits non-pure dual exchange
property. Furthermore, in this case, these properties are equivalent
to the definition of polymatroidal ideals.

\section{Componentwise polymatroidal ideals in  $K[x,y]$}\label{Sec2}

In this section, we discuss the componentwise polymatroidal ideals
in $K[x,y]$. Our main goal is to show that componentwise
polymatroidal ideals in $K[x,y]$ preserve
 some nice properties of
polymatroidal ideals. More precisely, we will show that they have
linear quotients, and their powers are again componentwise
polymatroidal.

Given a monomial ideal $I \subset S=K[x,y]$,
we will always assume that the elements of $G(I)$ are arranged with respect to pure lexicographical order induced by $x>y$.
\begin{enumerate}
\item[($*$)] Let $G(I)=\{u_0, \ldots, u_m\}$ with $u_i=x^{a_i}y^{b_i}$,
for all $i=0, \ldots, m$. Then we have
\[
a_0>a_1>\cdots > a_m {\text{ and }} b_0<b_1 < \cdots < b_m
\]
In particular, if $i<j$ then $a_i> a_j$ and $b_i< b_j$.
\end{enumerate}

Next, we recall the definition of $x$-tight and $y$-tight ideal from \cite{Qu}.
 In addition, we give the definition of $yx$-tight ideal to facilitate the description of componentwise polymatroidal ideals.
  Given $a,b \in \NN$ with $a<b$, we set $[a,b]:=\{c \in \NN: a\leq c \leq b\}$.

  \begin{Definition}\label{tightdef}
Let $\mm=(x,y) \subset K[x,y]$ and $I \subset K[x,y]$ be an
$\mm$-primary monomial ideal with an ordering of $G(I)$ as in $(*)$, that is, $a_m=b_0=0$. Then
\begin{enumerate}
\item $I$ is called {\em $x$-tight} if $a_{m-i}=i$, for all $i=0, \ldots, m$. To simplify the notation, if needed, we will refer to $I$ as $x$-tight in $[0,m]$.
\item $I$ is called {\em $y$-tight} if $b_{i}=i$, for all $i=0, \ldots, m$. To simplify the notation, if needed, we will refer to $I$ as $y$-tight in $[0,m]$.

\item $I$ is called {\em $yx$-tight} if there exists some $0\leq j \leq m$ such that  $b_i=i$,
for all $i=0, \ldots, j$ and $a_{m-i}=i$, for all $i=0,
\ldots, m-j$. In particular, $x$-tight and $y$-tight ideals are also $yx$-tight ideals.  The ideals that are $yx$-tight but not  $y$-tight or $x$-tight, will be referred to as {\em strict $yx$-tight.}
\end{enumerate}
The $x$-tight and $y$-tight ideals are also known as $\mm$-primary lexsegment ideals, see \cite[Remark 2.10]{CNJR}.
\end{Definition}

\begin{Remark}\label{imp}{\em
We emphasize a notable aspect of the definition of strict $yx$-tight
ideals, which plays a crucial role in the next results. If $I$ is a
strict $yx$-tight ideal, then the generating set of  $yx$-tight
ideal is such that its initial part is $y$-tight which is followed
by an $x$-tight part. Also, the monomial $u_j$ where this shift
happens, can be included in both $y$-tight and $x$-tight part of the
ideal. In particular, we have $a_j=m-j$ and $b_j=j$. Therefore, we
can write $I=x^{m-j}J+y^{j}K$ such that $J$ is a $y$-tight ideal
with $G(x^{m-j}J)=\{u_0, \ldots, u_j\}$ and $K$ is an $x$-tight
ideal with $G(y^jK)=\{u_{j}, \ldots, u_{m}\}$. Note that such a
monomial $u_j$ need not to be unique, but it must exist.}
\end{Remark}

\begin{Example}{\em
Let $I_1, I_2$ and  $I_3$ be monomial ideals in $K[x,y]$, with

\[
G(I_1)= \{x^7, x^6y^5, x^5y^8, x^4y^{10}, x^3y^{13}, x^2y^{17},
xy^{20},y^{25}\},
\]

\[
G(I_2)= \{x^{17}, x^{14}y, x^{11}y^2, x^8y^{3}, x^7y^{4}, x^5y^{5},
x^4y^{6},x^2y^{7}, y^8\},
\]
 and
\[
G(I_3)= \{x^{20}, x^{14}y, x^{11}y^2, x^8y^{3}, x^6y^{4}, x^5y^{9},
x^4y^{13},x^3y^{14},x^2y^{16}, xy^{17}, y^{21}\}.
\]
Here $I_1$ is $x$-tight in $[0,7]$ and $I_2 $ is $y$-tight in $[0,8]$. Moreover, $I_3$ is $yx$-tight.
 The monomial $ x^6y^{4}$ is a generator in $I_3$ where $y$-tight part of $G(I_3)$ shifts to $x$-tight.  We have $I_3=x^6J+y^4K$ with
\[
G(x^6J)=\{x^{20}, x^{14}y, x^{11}y^2, x^8y^{3}, x^6y^{4}\} \text{ and $J$ is $y$-tight in $[0,4]$,}
\]
 and
 \[G(y^4K)=\{x^6y^{4}, x^5y^{9}, x^4y^{13},x^3y^{14},x^2y^{16}, xy^{17},y^{21}\}  \text{ and $K$ is $x$-tight  in $[0,6]$.}
 \]
}
\end{Example}

The following proposition shows that the polymatroidal ideals in $K[x,y]$
are exactly those monomial ideals that are
 $x$-tight and $y$-tight at the same time.

\begin{Proposition}\label{polydeg2}
Let $I\subset K[x,y]$ be a polymatroidal ideal. Then $a_i=a_1-(i-1)$
and $b_i=b_1+(i-1)$, for all $i=0, \ldots, m$.
\end{Proposition}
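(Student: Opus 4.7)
The plan is to reduce the statement to showing that consecutive exponents satisfy $a_i - a_{i+1} = 1$, and then derive this directly from the polymatroidal exchange property applied to consecutive generators.

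Since $I$ is polymatroidal, it is generated in a single degree $d$, so $a_i + b_i = d$ for every $i = 0, \ldots, m$. Hence it is enough to prove that $a_i - a_{i+1} = 1$ for all $i = 0, \ldots, m-1$: once this is established, one obtains $a_i = a_0 - i$ for all $i$, and consequently $a_i = a_1 - (i-1)$ and $b_i = d - a_i = b_1 + (i-1)$.

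For the step $a_i - a_{i+1} = 1$, I would apply the exchange property of the polymatroidal ideal $I$ to $u := u_i$ and $v := u_{i+1}$ at the variable $x$, where $\deg_x(u_i) = a_i > a_{i+1} = \deg_x(u_{i+1})$. Since $S = K[x,y]$ has only two variables and $\deg_y(u_i) = b_i < b_{i+1} = \deg_y(u_{i+1})$, the only possible exchange index is $j = y$. Thus
\[
w := y \cdot (u_i / x) = x^{a_i - 1} y^{b_i + 1} \in I.
\]

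Because $w$ has total degree $d$, and $I$ is generated in degree $d$, the monomial $w$ must itself belong to $G(I)$, say $w = u_k$ for some $k$. Then $a_k = a_i - 1$ and $b_k = b_i + 1$. Using the strict monotonicity $a_0 > a_1 > \cdots > a_m$ of the chosen ordering $(*)$, the inequality $a_k = a_i - 1 < a_i$ forces $k > i$, while $a_k = a_i - 1 \geq a_{i+1}$ forces $k \leq i+1$; hence $k = i+1$, which yields $a_{i+1} = a_i - 1$, as required.

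The argument is essentially routine once the exchange property is invoked in the only direction available in two variables; the main (minor) subtlety is recognising that the exchanged monomial $w$ must itself be a generator (because all generators share the same total degree $d$), and then pinning down its position in the lex-ordered list of $G(I)$ using the strict monotonicity of the $a_i$'s. There is no real obstacle beyond this bookkeeping.
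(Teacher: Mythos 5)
Your proof is correct and follows essentially the same approach as the paper's: apply the exchange property to the consecutive pair $u_i, u_{i+1}$, observe that $y$ is the only available exchange index in two variables, and conclude that $y(u_i/x)$ must equal $u_{i+1}$. You spell out two small points the paper leaves implicit (that $w$ lies in $G(I)$ because $I$ is generated in a single degree, and the monotonicity bookkeeping pinning $w$ to position $i+1$), but the underlying argument is identical.
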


 \begin{proof}
 Pick any $0 \leq i \leq m-1$.  Then $a_i=\deg_x(u_i)> \deg_x(u_{i+1})=a_{i+1}$ and $b_i=\deg_y(u_i)< \deg_y(u_{i+1})=b_{i+1}$.
 The assumption that $I$ is polymatroidal provides us with $y(u_i/x)= x^{a_i-1}y^{b_i+1}\in G(I)$.
  This forces $a_{i+1}=a_i-1$ and $b_{i+1}=b_i+1$. Then the assertion follows immediately by computing values of $a_i$ and $b_i$ recursively.
 \end{proof}

Our next aim is to characterize componentwise polymatroidal in terms
of $yx$-tight ideals. Before this, we prove that the definitions of
ideal with non-pure exchange property, ideal with non-pure dual exchange property and
componentwise polymatroidal ideal coincide
for monomial ideals in $K[x,y]$.  The following result helps us to
switch among these definitions.

 \begin{Proposition}\label{equal}
 Let $I\subset K[x,y]$ be a monomial ideal. Then the following statements are equivalent:
 \begin{enumerate}
 \item $I$ satisfies non-pure dual exchange property.
 \item $I$ satisfies non-pure exchange property.
 \item $I$ is componentwise polymatroidal ideal.
 \end{enumerate}
 \end{Proposition}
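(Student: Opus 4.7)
The plan is to combine the known implications $(3)\Rightarrow(1)$ (Proposition~\ref{com}) and $(3)\Rightarrow(2)$ (\cite[Proposition 3.5]{BH}) with a single converse back to (3); this suffices once I observe that in $K[x,y]$ the conditions (1) and (2) are literally equivalent. For the latter, note that for two distinct generators $u_p=x^{a_p}y^{b_p}$ and $u_q=x^{a_q}y^{b_q}$ of $G(I)$, the ordering $(*)$ forces strict monotonicity in both coordinates, so for a fixed pair with $\deg u_p\leq \deg u_q$, both the non-pure exchange and the non-pure dual exchange conditions demand exactly the same monomial to lie in $I$: namely $x^{a_q+1}y^{b_q-1}$ when $p<q$, and $x^{a_q-1}y^{b_q+1}$ when $p>q$. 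Hence (1) and (2) coincide verbatim in two variables.

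For $(2)\Rightarrow(3)$, I would first apply non-pure exchange to consecutive pairs in $G(I)=\{u_0,\ldots,u_m\}$. The case $\deg u_{j-1}\leq \deg u_j$ produces $x^{a_j+1}y^{b_j-1}\in I$, and the strict monotonicity of $(a_i)$ and $(b_i)$ leaves $u_{j-1}$ as the only possible generator dividing it, forcing $a_{j-1}-a_j=1$; symmetrically, $\deg u_{j-1}\geq \deg u_j$ forces $b_j-b_{j-1}=1$. Next, if $(\deg u_j)_j$ had a strict local maximum at some $j_0$, the consecutive analysis already yields $a_{j_0-1}-a_{j_0}=1$ and $b_{j_0+1}-b_{j_0}=1$, while the local-maximum hypothesis itself forces $b_{j_0}-b_{j_0-1}\geq 2$ and $a_{j_0}-a_{j_0+1}\geq 2$. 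Applying non-pure exchange to the non-consecutive pair $(u_{j_0-1},u_{j_0+1})$ in either case of the degree inequality then yields a monomial in $I$ that no generator can divide, a contradiction. Hence the sequence $(\deg u_j)_j$ is first non-increasing, then non-decreasing.

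These two structural conditions assemble into componentwise polymatroidality: at each degree $k$, the active generators $\{u_j:\deg u_j\leq k\}$ form a contiguous block of indices by the quasi-convexity of $(\deg u_j)_j$, and the degree-$k$ contributions from two consecutive active generators --- each an interval of $x$-exponents on the line $a+b=k$ --- glue into a single interval precisely because either $a_j-a_{j+1}=1$ or $b_{j+1}-b_j=1$. This is exactly the polymatroidal property of $I_{\langle k\rangle}$. The main technical obstacle is the careful pigeonhole step, particularly the no-strict-local-maximum argument, which relies crucially on the strict monotonicity of the exponent sequences available only in two variables, and whose direct analogue is expected to fail in three or more variables.
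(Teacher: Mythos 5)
Your equivalence (1) $\iff$ (2) and the structural analysis of consecutive pairs are correct, and your overall route --- deriving exponent constraints together with quasi-convexity of $(\deg u_j)_j$ and then gluing intervals on each degree line $a+b=k$ --- is a genuine alternative to the paper's proof of this proposition, which instead proceeds by induction on the degree and writes $G(I_{\langle a+1\rangle})$ as $G(\mm I_{\langle a\rangle})$ together with new minimal generators, then runs a three-case analysis. What you are effectively doing is proving the later structural result, Theorem~\ref{prop}, directly from the exchange property, and then reading off componentwise polymatroidality; the paper takes the opposite order, establishing the proposition first and Theorem~\ref{prop} afterwards.

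There is, however, a genuine gap in the quasi-convexity step. You rule out a \emph{strict} local maximum of $(\deg u_j)_j$, i.e., an index $j_0$ with $\deg u_{j_0-1}<\deg u_{j_0}>\deg u_{j_0+1}$, and then assert that the sequence is ``first non-increasing, then non-decreasing.'' That implication is false in general (the sequence $5,6,6,5$ has no strict local maximum but is not quasi-convex). What must be excluded is any triple $j<k<\ell$ with $d_j<d_k>d_\ell$, and a plateau ($d_{j_0-1}<d_{j_0}=\cdots=d_{j_0+s}>d_{j_0+s+1}$ with $s\geq 1$) is not reached by your argument: for interior equalities your consecutive analysis gives only $a_i-a_{i+1}=1$ and $b_{i+1}-b_i=1$, not the $\geq 2$ gaps the contradiction hinges on. The repair is to apply non-pure exchange to the pair $(u_{j_0-1},u_{j_0+s+1})$ across the whole plateau, using that $a_{j_0+s}=a_{j_0}-s$ and $b_{j_0+s}=b_{j_0}+s$, together with $a_{j_0-1}=a_{j_0}+1$, $b_{j_0}-b_{j_0-1}\geq 2$ at the left edge and $b_{j_0+s+1}=b_{j_0+s}+1$, $a_{j_0+s}-a_{j_0+s+1}\geq 2$ at the right edge; this is exactly the reduction the paper makes in proving part (iv) of Theorem~\ref{prop}, where $j$ and $\ell$ are chosen as the closest indices to $k$ with $d_j<d_k>d_\ell$. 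Once this is patched, your interval-gluing conclusion for each $I_{\langle k\rangle}$ goes through.
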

  \begin{proof}
 (1) $\iff$ (2) It is a direct consequence of definitions non-pure dual exchange property and non-pure exchange property. \\
 (3) $\Rightarrow$ (2) follows from \cite[Proposition 3.5]{BH}.\\
 (2) $\implies$ (3) Let $d=\min\{ \deg(u): u \in G(I)\}$. We need to show that $I_{<a>}$ is polymatroidal, for all $a \geq d$.
 We will prove this by applying induction on $a$. It follows from the definition of
  non-pure exchange property and the fact that $I_{<d>}$ is  generated in the same degree, that $I_{<d>}$ is a polymatroidal ideal. Assume that $I_{<k>}$
 is polymatroidal for  $a\geq k \geq d$. We want to show that $I_{<a+1>}$ is a
 polymatroidal ideal. Let $u,v\in G(I_{<a+1>})$ with $\deg_{x}(u)<\deg_{x}(v)$ and $\deg_y(u)>\deg_y(v)$. We will show that $(v/x)y\in I$.
 Let $G(I_{<a+1>})=G(\mm I_{<a>})+(w_1,\ldots,w_t)$, where $w_i\in
 G(I)$ and $\deg(w_i)=a+1$ for all $i$. Set $A:=\{w_1,\ldots,w_t\}$. The inductive hypothesis
 implies that $I_{<a>}$ is polymatroidal. It is known that the product of two polymatroidal
 ideals is again polymatroidal, see \cite[Theorem 12.6.3]{HHbook}, therefore, $\mm I_{<a>}$
 is polymatroidal as well. We divide the proof into the following three cases:\\

 {\bf Case 1:} Let $u,v\in A$. Then $(v/x)y\in I$, because $I$ has non-pure  exchange property.\\

{\bf  Case 2:} Let $u\in\mm I_{<a>}$ and $v\in A$. Then
there exists $w\in G(I)$ such that $w\mid u$ and
$\deg(w)<\deg(u)=\deg(v)$.
 Since $\deg_x(w) \leq \deg_x(u)<\deg_x(v)$ and $w,v \in G(I)$, we obtain $\deg_y(w)>\deg_y(v)$. This gives $(v/x)y\in I$
 by following the definition of non-pure  exchange property.\\

 {\bf Case 3:} Let $u\in A$ and $v\in\mm I_{<a>}$. Then there exists $w\in G(I)$ such that $w\mid v$
 and $\deg(w)<\deg(v)=\deg(u)$. Since $\deg_y(w)\leq \deg_y(v)<\deg_y(u)$
 and $w,u \in G(I)$, we obtain $\deg_x(w)>\deg_x(u)$ and  hence $u_1:=(u/y)x\in
 I$. If $u_1=v$, then $(v/x)y=u\in I$, as desired. If $u_1\neq
 v$ and $u_1\in\mm I_{<a>}$. Then, by using the assumption that $\mm I_{<a>}$ is polymatroidal together
 with $\deg_x(u_1)<\deg_x(v)$ and $\deg_y(u_1)>\deg_y(v)$,
 we obtain $(v/x)y\in I$. If $u_1\neq
 v$ and $u_1\in A$. We again argue as before. Since $\deg(w)<\deg(u_1)$ and $\deg_{y}(w)\leq
 \deg_y(v)<\deg_y(u)-1=\deg_y(u_1)$, we have that
 $\deg_x(w)>\deg_x(u_1)$ and $u_2:=(u_1/y)x=(u/y^2)x^2\in
 I$. If $u_2=v$, then $(v/x)y=(u/y)x=u_1\in I$, as desired. Otherwise, we apply the previous argument by
 interchanging the role of $u_1$ to $u_2$. The argument concludes affirmatively after a finite number of
 steps
 because $G(I_{\langle a+1\rangle})$ is a finite set.
 \end{proof}
The following theorem gives a complete characterization of
componentwise polymatroidal ideals in $K[x,y]$.

\begin{Theorem}\label{prop}
Let $I \subset S=K[x,y]$ be a monomial ideal with $G(I)$ ordered as
in $(*)$. If $I$ is componentwise polymatroidal, then the following
statements hold:
\begin{enumerate}
\item[(i)] If $\deg(u_i)> \deg(u_{i+1})$, then  $a_i -1 >a_{i+1}$ and $b_{i+1}=b_i+1$.
\item[(ii)]  If $\deg(u_{i})< \deg(u_{i+1})$,  then $b_i  +1<b_{i+1}$ and $a_i-1=a_{i+1}$.
\item[(iii)] If $\deg(u_{i+1})= \deg(u_i)$,  then $b_i  +1=b_{i+1}$ and $a_{i}-1=a_{i+1}$.
\item[ (iv)] Let $d_i= \deg (u_i)$ for all $i=0,\ldots,m$. Then there exists some $0 \leq j\leq m$ such
that

\[
d_0 \geq d_2 \geq \cdots \geq d_j \leq d_{j+1} \leq \cdots \leq d_m.
\]
\end{enumerate}
Moreover, if an ideal $I$ satisfies above statements, then $I$ is componentwise polymatroidal ideal.
\end{Theorem}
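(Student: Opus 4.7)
The plan is to derive (i)--(iii) directly from the exchange characterizations of componentwise polymatroidality (Propositions~\ref{com} and~\ref{equal}), and then to deduce the V-shape (iv) by showing that any local increase-then-decrease pattern in $(d_s)$ would force some graded component $I_{\langle d\rangle}$ to violate the arithmetic-progression structure of Proposition~\ref{polydeg2}. The converse then amounts to reading off that same arithmetic-progression structure in every degree.

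For (iii) I would observe that if $d_i = d_{i+1}$, then both $u_i$ and $u_{i+1}$ lie in $G(I_{\langle d_i\rangle})$; any degree-$d_i$ monomial of $I$ sitting lex-between them would be a proper multiple of some $u_k$ with $d_k < d_i$, producing an element of $G(I)$ strictly between $u_i$ and $u_{i+1}$ and contradicting consecutiveness. Hence $u_i, u_{i+1}$ are lex-consecutive in $G(I_{\langle d_i\rangle})$, so Proposition~\ref{polydeg2} gives $a_{i+1}=a_i-1$ and $b_{i+1}=b_i+1$. For (i), applying non-pure exchange (which holds by Proposition~\ref{equal}) to the pair $(u_{i+1},u_i)$ with the variable $x$ yields $x^{a_i-1}y^{b_i+1}\in I$; the divisibility constraints on any $u_s$ dividing this monomial, combined with the strict monotonicity of $(b_s)$, pin down $s=i+1$ and $b_{i+1}=b_i+1$, while a degree comparison excludes $a_{i+1}=a_i-1$. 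The argument for (ii) is symmetric via non-pure dual exchange (Proposition~\ref{com}).

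For (iv) I would argue by contradiction: assume the V-shape fails and choose $i$ to be the largest index with $d_i < d_{i+1}$ and $j$ to be the smallest index $>i$ with $d_j > d_{j+1}$. By (i)--(iii) the intermediate degrees satisfy $d_{i+1}=\cdots=d_j$, together with $b_{i+1}\ge b_i+2$ and $a_{j+1}\le a_j-2$. I then split on how $d_{j+1}$ compares with $d_i$. If $d_{j+1}\le d_i$, the component $I_{\langle d_i\rangle}$ contains both $u_i$ (at $x$-degree $a_i$) and the degree-$d_i$ multiples of $u_{j+1}$ (all of strictly smaller $x$-degree than $a_i$); polymatroidality together with Proposition~\ref{polydeg2} forces $x^{a_i-1}y^{b_i+1}\in I$, but $b_{i+1}\ge b_i+2$ prevents any $u_s$ from dividing it. If $d_{j+1}>d_i$, the parallel analysis in $I_{\langle d_{j+1}\rangle}$ (where $u_{j+1}$ and the degree-$d_{j+1}$ multiples of $u_i$ straddle $x$-degree $a_{j+1}+1$) forces $x^{a_{j+1}+1}y^{b_j}\in I$, which is again excluded by the monotonicity of $(b_s)$.

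For the converse, fix $d$ and write the monomials of degree $d$ in $I$ as the union, over $s$ with $d_s\le d$, of the $x$-degree intervals $[a_s,\,d-b_s]$. The indexing set $\{s:d_s\le d\}$ is contiguous by (iv), so it is enough to check that for each consecutive pair $(s,s+1)$ in this set the two intervals are adjacent or overlap; a short numerical check in each of the three cases (i), (ii), (iii) confirms this. Hence the union is a single interval, the degree-$d$ monomials of $I$ form an arithmetic progression with consecutive exponent difference $(-1,+1)$, and the easy converse of Proposition~\ref{polydeg2} yields that $I_{\langle d\rangle}$ is polymatroidal. The main obstacle is (iv): choosing the correct graded component according to the sign of $d_{j+1}-d_i$ and pinpointing in each case the precise witness monomial whose forced membership in $I$ contradicts the rigidity imposed by (i) and (ii).
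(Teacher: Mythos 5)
Your proof of parts (i)--(iii) follows the same strategy as the paper: apply the (dual) exchange property to the consecutive pair $(u_{i+1},u_i)$, locate the only possible divisor of the resulting monomial, and read off the arithmetic conditions. For (iv) and the converse, however, you take a genuinely different route. The paper stays entirely inside the exchange-property framework of Proposition~\ref{equal}: for (iv) it applies the non-pure dual exchange property directly to the generator pair straddling the forbidden bump and shows the resulting monomial has no divisor in $G(I)$, and for the converse it verifies the non-pure dual exchange property for all pairs $u_i,u_k$ and then invokes Proposition~\ref{equal}. You instead pass through the graded components: for (iv) you use Proposition~\ref{polydeg2} to argue that the $x$-degrees of $G(I_{\langle d_i\rangle})$ (or $G(I_{\langle d_{j+1}\rangle})$) must form an interval, so the gap created by the bump forces a monomial into $I$ that has no divisor; for the converse you show directly that the $x$-degree set of each $G(I_{\langle d\rangle})$ is a single interval (by merging the intervals $[a_s,d-b_s]$ over the contiguous index set $\{s:d_s\le d\}$) and conclude polymatroidality by the easy converse of Proposition~\ref{polydeg2}. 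Both approaches work; yours makes the ``lexsegment'' geometry of each component more visible, while the paper's reuses the exchange-property toolkit and is shorter.

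One genuine slip in your proof of (iv): the pair you specify need not exist. Taking $i$ to be the \emph{largest} index with $d_i<d_{i+1}$ and then $j$ the smallest index $>i$ with $d_j>d_{j+1}$ fails if the last increase in the sequence comes after the last decrease (e.g.\ a degree pattern of shape up--down--up has $\max\{s:d_s<d_{s+1}\}$ strictly larger than every $t$ with $d_t>d_{t+1}$, so your $j$ does not exist even though the $V$-shape fails). The remedy is to choose any pair $s<t$ with $d_s<d_{s+1}$ and $d_t>d_{t+1}$ and $t-s$ minimal; then $d_{s+1}=\cdots=d_t$ automatically, and the rest of your component-interval contradiction argument applies verbatim with $(i,j)=(s,t)$. (The paper makes essentially the analogous choice: closest indices to a local maximum.) With that adjustment, the proof is complete.
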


\begin{proof}
Let $I$ be a componentwise polymatroidal ideal. It follows from Proposition~\ref{equal}
 that $I$ has non-pure dual exchange property. We will use this fact repeatedly in the following arguments.\\
(i) Let $\deg(u_i)> \deg(u_{i+1})$. Note that there is no element
$x^cy^d$ in $G(I)$ with
 $a_i>c>a_{i+1}$ and $b_i < d < b_{i+1}$. Moreover, we have $a_i=\deg_x(u_i)> \deg_x(u_{i+1})=a_{i+1}$.
 The assumption that $I$ is componentwise polymatroidal provides with $v=(u_{i}/x)y=x^{a_{i}-1}y^{b_{i}+1}$.
  Note that $v \neq u_{i+1}$ because $\deg (v) = \deg (u_i)> \deg(u_{i+1}) $. Therefore, $v  \notin G(I)$ and the
  only candidate in $G(I)$ that divides $v$ is $u_{i+1}$. Summarizing this, we have that $u_{i+1}$ is such that it divides
    $v=x^{a_{i}-1}y^{b_{i}+1}$, but it does not divide $u_i=x^{a_i}y^{b_i}$, and $\deg(u_i)> \deg(u_{i+1})$.
    Therefore, $u_{i+1}= x^{a_{i+1}}y^{b_{i}+1}$ with $a_i -1 >a_{i+1}$.

(ii)  and (iii)  follow from a similar argument as in (i).

To prove (iv), it is enough to show that there does not exist $1 \leq j<k< \ell \leq m$ with $d_j < d_k  > d_{\ell}$.
 Assume that such $j,k,\ell$ exist. Moreover, we may assume that for all $i$ with $j<i<\ell$ we have $d_{i}=d_k$.
  In other words, $j$ and $\ell$ are the closest integers to $k$ that satisfy  $d_j < d_k  > d_{\ell}$.  Let $k=j+t$ and $\ell=j+r$.
  Following (i),(ii) and (iii), we conclude that each $i=0, \ldots, r-1$ satisfy $a_{j+i}= a_j-i$ and $b_{\ell-i}= b_{\ell}-i$.
  Moreover,  $ a_{\ell}=a_{j+r} < a_j-r $ and $ b_{j}=b_{\ell-r} < b_{\ell}-r $.

If $d_j < d_{\ell}$, then we have $v=(u_{\ell}/y)x \in I$. The only possible element in $G(I)$ that can divide
 $v$ is $u_{\ell-1}$, but $a_{\ell-1}> a_{\ell}+1$. Hence $v \notin I$. If $d_j > d_{\ell}$,
 then we have $u=(u_j/x)y \in I$. The only possible element in $G(I)$ that can divide  $w$ is $u_{j+1}$,
  but $b_{j+1}> b_j+1$. Hence $w \notin I$. This gives us a contradiction to the assumption that $I$ satisfies non-pure dual exchange property.

  To prove the converse, take $u_i,u_k\in G(I)$ with $\deg(u_k)\geq \deg(u_i)$ and $\deg_x(u_k)< \deg_x(u_i)$.
   It is enough to show that $x(u_k/y) \in I$. It follows from $\deg_x(u_k)< \deg_x(u_i)$ that $k>i$.

    Let $k=i+1$. If $\deg(u_k)>\deg(u_{i})$, then the assertion follows from statement
 (ii). If  $\deg(u_k)=\deg(u_{i})$, then the assertion follows from statement
  (iii). Now,  let $k>i+1$. Then it follows from statement (iv) that $\deg(u_k)\geq \deg(u_{k-1})\geq \deg(u_i)$.
   Again,  if $\deg(u_k)=\deg(u_{k-1})$, then the assertion follows from statement (iii)
    and if $\deg(u_k)>\deg(u_{k-1})$, then from statement (ii) it follows that $x(u_k/y)$ is a
    multiple of $u_{k-1}$ and hence $x(u_k/y) \in I$, as required.
  \end{proof}

\begin{Corollary}\label{cortight}
    Let $I\subset K[x,y]$ be a monomial ideal with an ordering of $G(I)$ as given in $(*)$. Then $I=x^{a_m}y^{b_0} J$ is a componentwise polymatroidal
    ideal if and only if $J$ is a $yx$-tight ideal.
    \end{Corollary}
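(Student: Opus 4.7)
The plan is to reduce to the $\mm$-primary case and then translate the characterization provided by Theorem~\ref{prop} directly into the $yx$-tight structure. First, I would observe that multiplying a monomial ideal by a single monomial shifts every exponent uniformly, so the exchange condition defining componentwise polymatroidal ideals is preserved. Hence $I = x^{a_m} y^{b_0} J$ is componentwise polymatroidal if and only if $J$ is. Because the generators of $J$ have minimum $x$-exponent $0$ and minimum $y$-exponent $0$, $J$ is $\mm$-primary, so Definition~\ref{tightdef} applies. It therefore suffices to show that an $\mm$-primary ideal $J \subset K[x,y]$ with ordered generators $\{x^{a_i'} y^{b_i'}\}_{i=0}^m$ satisfying $a_m' = b_0' = 0$ is componentwise polymatroidal if and only if it is $yx$-tight.

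For the forward direction, I would apply Theorem~\ref{prop}. Condition (iv) furnishes an index $j$ with $d_0 \geq \cdots \geq d_j \leq \cdots \leq d_m$. For each transition $i < j$, the relation $d_i \geq d_{i+1}$ together with condition (i) or (iii) forces $b_{i+1}' = b_i' + 1$. Since $b_0' = 0$, induction gives $b_i' = i$ for $i = 0, \ldots, j$, which is the $y$-tight portion. Symmetrically, for $i \geq j$, the relation $d_i \leq d_{i+1}$ and condition (ii) or (iii) force $a_{i+1}' = a_i' - 1$, and from $a_m' = 0$ induction gives $a_{m-k}' = k$ for $k = 0, \ldots, m-j$, which is the $x$-tight portion. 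Together these are exactly the defining conditions of $yx$-tightness at threshold $j$.

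For the converse, starting from a $yx$-tight $J$ with threshold $j$, I would compute the degree sequence $d_i = a_i' + b_i'$ and verify conditions (i)–(iv) of Theorem~\ref{prop} directly. For $i < j$, $b_i' = i$ and $a_i' > a_{i+1}'$, so $d_{i+1} - d_i = (a_{i+1}' - a_i') + 1 \leq 0$; symmetrically for $i \geq j$, $a_i' = m - i$ and $b_i' < b_{i+1}'$, so $d_{i+1} - d_i = (b_{i+1}' - b_i') - 1 \geq 0$, yielding (iv). Conditions (i)–(iii) then follow from case analysis on whether $d_i > d_{i+1}$, $d_i < d_{i+1}$, or $d_i = d_{i+1}$: in each case the known equality $b_{i+1}' = b_i' + 1$ (for $i < j$) or $a_{i+1}' = a_i' - 1$ (for $i \geq j$) together with the strict ordering on the other exponent gives the required inequality, establishing componentwise polymatroidality via Theorem~\ref{prop}.

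The main obstacle I expect is coordinating the threshold $j$ coming from (iv) with the transition point in the definition of $yx$-tight and handling the boundary index $i = j$, where both the $y$-tight and $x$-tight structural regimes must agree at $u_j$. Careful bookkeeping at this boundary, using the fact that $u_j$ lies in both halves as emphasized in Remark~\ref{imp}, should resolve the issue without difficulty; the rest is routine recursive computation of exponents from the forced unit increments.
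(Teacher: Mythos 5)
Your proof is correct and follows essentially the same approach as the paper's: the forward direction extracts the threshold $j$ from Theorem~\ref{prop}(iv) and forces the unit increments via (i)--(iii) together with the boundary conditions $a_m'=b_0'=0$, exactly as the paper does (your explicit invocation of (iii) when $d_i = d_{i+1}$ is in fact slightly more careful than the paper's wording, which cites only (i) and (ii)). For the converse you verify conditions (i)--(iv) from the $yx$-tight structure and invoke the ``Moreover'' clause of Theorem~\ref{prop}, whereas the paper checks the non-pure dual exchange property directly and appeals to Proposition~\ref{equal}; since the ``Moreover'' clause is itself proved via that very route, the two converses are the same argument packaged at different levels of abstraction.
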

 \begin{proof}
Let $d_i= \deg (u_i)$ for all $i=0,\ldots,m$. From Theorem~\ref{prop}
(iv), it follows that there exists some $0 \leq j\leq m$ such that
\begin{equation}
d_0 \geq d_2 \geq \cdots \geq d_j \leq d_{j+1} \leq \cdots \leq d_m.
\end{equation}
Let $j=0$, that is, $d_0 \leq \cdots \leq d_m$. Then Theorem~\ref{prop} (ii) gives that $a_i-1=a_{i+1}$, for all $i=0, \ldots,m-1$.
It follows that $I=x^{a_m}y^{b_0}J$ such that $J$ is an $x$-tight ideal.

Let $j=m$, that is, $d_0 \geq \ldots \geq d_m$. Then Theorem~\ref{prop} (i) gives that $b_i+1=b_{i+1}$, for all $i=0, \ldots,m-1$.
 It follows that $I=x^{a_m}y^{b_0}J$ such that $J$ is a $y$-tight ideal.

Now let $0<j<m$, such that the sequence in (1) is neither an increasing sequence nor a decreasing sequence. Again, it follows from Theorem~\ref{prop} (i) that  $b_{i+1}=b_i+1$, for all $i=0, \ldots, j-1$ and from Theorem~\ref{prop} (ii)
 that $a_{i+1}=a_i-1$, for all $i=j,\ldots, m-1$. Hence, $I=x^{a_m}y^{b_0}J$ such that $J$ is a strict $yx$-tight ideal.

To see the converse, we first note that $I$ is componentwise polymatroidal if and only if $J$ is  componentwise polymatroidal.
 Moreover, it is a direct consequence of definition of non-pure dual exchange property that $yx$-tight
  ideals satisfy the non-pure dual exchange property. Then the conclusion holds due to Proposition~\ref{equal}.
\end{proof}

\begin{Remark}\label{complete}{\em
In simpler words, the Corollary~\ref{cortight} can be summarized as
follows: if $I\subset K[x,y]$ is a componentwise polymatroidal ideal
with $G(I)$ ordered as in $(*)$, then there exists some $0 \leq j
\leq m$ such that
\begin{enumerate}
\item[(i)] $b_0, \ldots, b_j$ is an increasing sequence of consecutive integers and $a_0, \ldots, a_j$ is a decreasing sequence of integers,
\item[(ii)] $a_j, \ldots, a_m$ is a decreasing sequence of consecutive integers and $b_j, \ldots, b_m$ is an increasing sequence of integers.
\end{enumerate}
}
\end{Remark}

Now, we prove the first main result of this section.
\begin{Theorem}\label{2quotient}
Let $I\subset S=K[x,y]$ be a componentwise polymatroidal ideal. Then
$I$ has linear quotients.
\end{Theorem}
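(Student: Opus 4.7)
The plan is to use the explicit structural description of componentwise polymatroidal ideals in $K[x,y]$ given by Corollary \ref{cortight} together with Remark \ref{complete} in order to write down an admissible order on $G(I)$ directly. Write $G(I)=\{u_0,\ldots,u_m\}$ in the pure lex order of $(*)$, and let $0\leq j\leq m$ be the corner index supplied by Remark \ref{complete}: the $y$-exponents $b_0,\ldots,b_j$ are consecutive integers, the $x$-exponents $a_j,\ldots,a_m$ are consecutive integers (decreasing by $1$), and the remaining exponents $a_0,\ldots,a_j$ and $b_j,\ldots,b_m$ are only known to be strictly monotone. The admissible order I propose is
\[
u_j,\; u_{j-1},\; \ldots,\; u_1,\; u_0,\; u_{j+1},\; u_{j+2},\; \ldots,\; u_m,
\]
i.e., the $y$-tight block is listed in reverse and is then followed by the remaining $x$-tight tail in its original order. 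The degenerate cases $j=0$ and $j=m$ recover, respectively, the pure lex and pure reverse-lex orderings.

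To verify the linear quotient property I would compute, at each stage of the order, the corresponding colon ideal explicitly, splitting into the two blocks. For the first block, fix $1\leq t\leq j$ and consider the colon $(u_j,u_{j-1},\ldots,u_{j-t+1}):u_{j-t}$: each $k$ with $j-t+1\leq k\leq j$ satisfies $a_k<a_{j-t}$ and $b_k>b_{j-t}$, so $u_k/\gcd(u_k,u_{j-t})=y^{b_k-b_{j-t}}$, and because $b_0,\ldots,b_j$ are consecutive, the minimum exponent $b_{j-t+1}-b_{j-t}=1$ is attained at $k=j-t+1$, so the colon equals $(y)$. For the second block, fix $\ell\geq 1$ and consider the colon of the ideal generated by $\{u_0,\ldots,u_j,u_{j+1},\ldots,u_{j+\ell-1}\}$ with respect to $u_{j+\ell}$: every previously listed $u_k$ has $a_k>a_{j+\ell}$ and $b_k<b_{j+\ell}$, so $u_k/\gcd(u_k,u_{j+\ell})=x^{a_k-a_{j+\ell}}$. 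For $k\leq j$ this exponent is at least $a_j-a_{j+\ell}=\ell\geq 1$, while for $j<k\leq j+\ell-1$ one has $a_k-a_{j+\ell}=(j+\ell)-k$, minimised at $k=j+\ell-1$ with value $1$. Hence the colon equals $(x)$.

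The whole argument is short once the structural result is invoked, so the only real difficulty lies in the initial bookkeeping: one must ensure that the common index $j$ serves simultaneously as the top of the $y$-tight block and the top of the $x$-tight tail, so that the two properties $b_{i+1}-b_i=1$ and $a_{i}-a_{i+1}=1$ can be read off starting from $j$ in the correct directions. This is exactly what Remark \ref{complete} guarantees, and once it is in hand the colon computations collapse to first differences of consecutive integers and yield either $(x)$ or $(y)$ at every step, establishing linear quotients.
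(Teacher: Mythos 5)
Your proof is correct and takes essentially the same approach as the paper: both rely on Corollary~\ref{cortight}/Remark~\ref{complete} to locate the corner index $j$ and then produce an explicit admissible order starting from $u_j$, with colon ideals collapsing to $(x)$ or $(y)$ via the consecutive-integer structure of the two tight blocks. The only difference is that you list the $y$-tight block (in reverse) before the $x$-tight tail, whereas the paper lists the $x$-tight tail first and then the $y$-tight block; the two orders are mirror images under swapping $x$ and $y$ and both work for the same reason.
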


\begin{proof}
First, we order the elements of $G(I)$ as in $(*)$. Following
Remark~\ref{complete}, we obtain some $0\leq j\leq m$ such that
$b_0, \ldots, b_j$ is an increasing sequence of consecutive integers
and $a_j, a_{j+1}, \ldots, a_m$ is a decreasing sequence of
consecutive integers. We order the elements in $G(I)$ as follows:

\begin{enumerate}
\item[{($\#$)}] \quad $u_j, \ldots, u_m, u_{j-1}, u_{j-2}, \ldots,  u_0.$
\end{enumerate}

 When $j=0$ or $j=m$, then it is clear that ($\#$) is an admissible order of $G(I)$. Now, assume that $0<j<m$. Then
 \begin{enumerate}
 \item $(u_j, \ldots, u_k): u_{k+1}= (x)$, for each $k=j, \ldots, m-1$,
 \item $(u_j, \ldots, u_m): u_{j-1}= (y)$,
 \item  $(u_j, \ldots, u_m, u_{j-1}, \ldots, u_k): u_{k-1}= (y)$ for each $k=1, \ldots, j-1$.
 \end{enumerate}
This completes the proof.
\end{proof}
The following example illustrates the admissible order described in
Theorem~\ref{2quotient}.

\begin{Example}
Let $G(I)=\{  x^{14}y^2,x^{13}y^{3},x^{10}y^4, x^9y^5, x^5y^6,
x^4y^{12},x^3y^{13}\}$. Then
\[
x^5y^6  > x^4y^{12}> x^3y^{13} > x^9y^5 > x^{10}y^4>  x^{13}y^{3}>
x^{14}y^2,
\]

is the admissible order as described in Theorem~\ref{2quotient}.
\end{Example}

The powers of componentwise polymatroidal ideals are not necessarily
componentwise polymatroidal. The ideal $I=(x_1^2, x_2^2x_3,
x_1x_2x_3, x_1x_2^2,x_1x_3^3, x_2x_3^3)\subset K[x_1,x_2,x_3]$ is
componentwise polymatroidal, however, $I^2$  fails to have this
property, as shown in \cite[Example 3.3]{BH}. Note that this ideal
$I$ is generated in the polynomial ring with three variables. We
will show that if $I,J\subset K[x,y]$ are componentwise
polaymatroidal ideals, then
 $IJ$ is again a componentwise polaymatroidal ideal. In particular,
  all powers of a componentwise polymatroidal ideal in $K[x,y]$ are again componentwise polymatroidal.
   Before stating the next result, we first recall the following result from \cite{Qu}, that will be used in Theorem~\ref{tight}.

 \begin{Proposition} \label{recall}\cite[Proposition 2.2]{Qu}
 If $I$ is $x$-tight in $[0,r]$ and $J$ is $y$-tight in $[0,s]$, then $IJ=y^sI+x^rJ$.
 \end{Proposition}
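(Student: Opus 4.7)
The plan is to prove the two inclusions separately. Writing $G(J) = \{v_0,\ldots,v_s\}$ with $v_j = x^{c_j} y^j$, the $y$-tightness of $J$ means $c_0 > c_1 > \cdots > c_s = 0$, while the $x$-tightness of $I$ means $u_i = x^{r-i} y^{b_i}$ with $0 = b_0 < b_1 < \cdots < b_r$. The inclusion $y^sI + x^rJ \subseteq IJ$ is immediate: by $y$-tightness $y^s = v_s \in J$, and by $x$-tightness $x^r = u_0 \in I$, so both $y^sI$ and $x^rJ$ sit inside $IJ$.

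For $IJ \subseteq y^sI + x^rJ$, the plan is to take an arbitrary product $u_iv_j = x^{r-i+c_j} y^{b_i+j}$ and exhibit a generator of the right-hand side dividing it, splitting into two cases according to whether $b_i + j \geq s$. If $b_i + j \geq s$, I would test the generator $y^s u_{i'}$ of $y^sI$ with $i' := \max(0, i - c_j)$; divisibility boils down to $b_{i'} \leq b_i + j - s$. When $c_j \geq i$ this is immediate (then $i'=0$ and we use the case hypothesis); when $c_j < i$ it reduces, via $b_i - b_{i - c_j} \geq c_j$, to the bound $c_j \geq s - j$. If instead $b_i + j < s$, I would test the generator $x^r v_{j'}$ of $x^rJ$ with $j' := b_i + j$ (a valid index since $j' < s$); divisibility boils down to $c_{j'} \leq c_j - i$, which reduces via $c_j - c_{j'} \geq b_i$ (a strict decrease over $j' - j = b_i$ steps) to the bound $b_i \geq i$.

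The main obstacle, modest as it is, is spotting the correct dividing generator in each case and then extracting from tightness the two uniform bounds $c_j \geq s - j$ and $b_i \geq i$ that make both case analyses go through. Each follows at once from strict monotonicity of a nonnegative integer sequence with a known endpoint: the $c_k$'s strictly decrease from $c_j$ down to $c_s = 0$ in $s-j$ steps, and the $b_k$'s strictly increase from $b_0 = 0$ up to $b_i$ in $i$ steps. Once these are in hand, the whole proposition reduces to routine bookkeeping, and the edge cases ($i=0$ or $j=s$) are subsumed by the two main cases.
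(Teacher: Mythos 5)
The paper does not prove this proposition; it simply cites \cite[Proposition 2.2]{Qu} and uses the statement as a black box, so there is no in-paper argument to compare against. Your proof is a correct, self-contained verification. The easy inclusion $y^sI + x^rJ \subseteq IJ$ is immediate as you say. For the reverse inclusion, the choice of witness generator in each case is right: when $b_i + j \geq s$, taking $i' = \max(0, i - c_j)$ makes the $x$-divisibility $r - i' \leq r - i + c_j$ automatic, and the $y$-divisibility $b_{i'} \leq b_i + j - s$ follows either directly from the case hypothesis (if $i \leq c_j$) or from $b_{i-c_j} \leq b_i - c_j$ combined with $c_j \geq s - j$; when $b_i + j < s$, taking $j' = b_i + j$ makes the $y$-divisibility an equality and the $x$-divisibility $c_{j'} \leq c_j - i$ follows from $c_j - c_{j'} \geq j' - j = b_i \geq i$, with $j' < s$ guaranteeing a valid index. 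The two monotonicity bounds $c_j \geq s-j$ and $b_i \geq i$ do indeed both come from the same observation (a strictly monotone sequence of nonnegative integers with a known zero endpoint), and the edge cases $i = 0$, $j = s$, $r = 0$, $s = 0$ are all subsumed. This is an acceptable elementary proof; the only stylistic remark is that the two uniform bounds are used in dual roles and would read more cleanly if isolated as a one-line lemma before the case split.
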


Now, we give the second main result of this section.

\begin{Theorem}\label{tight}
Let $I$ and $J$ be $yx$-tight ideals in $K[x,y]$. Then $IJ$ is also a $yx$-tight ideal.
\end{Theorem}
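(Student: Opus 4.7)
The plan is to establish a clean additive decomposition of $IJ$ that exhibits its $yx$-tight structure, leveraging Remark~\ref{imp} and Proposition~\ref{recall}. Let $I$ have $m+1$ minimal generators with switch index $j$, and let $J$ have $n+1$ minimal generators with switch index $k$. By Remark~\ref{imp}, write
\[
I = x^{m-j}A + y^{j}B, \qquad J = x^{n-k}C + y^{k}D,
\]
where $A,C$ are $y$-tight (in $[0,j]$ and $[0,k]$ respectively) and $B,D$ are $x$-tight (in $[0,m-j]$ and $[0,n-k]$); in the degenerate cases where $I$ or $J$ is purely $y$-tight or $x$-tight, one of the four pieces is interpreted as the unit ideal. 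The key identity I aim for is
\[
IJ \;=\; x^{(m-j)+(n-k)}\,AC \;+\; y^{j+k}\,BD.
\]

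To establish this identity, I would expand $IJ$ into its four cross terms and apply Proposition~\ref{recall} to the two mixed products: since $A$ is $y$-tight in $[0,j]$ and $D$ is $x$-tight in $[0,n-k]$, one has $AD = y^{j}D + x^{n-k}A$, and analogously $BC = y^{k}B + x^{m-j}C$. Substituting these into the expansion and regrouping by the common factors $x^{(m-j)+(n-k)}$ and $y^{j+k}$ yields
\[
IJ = x^{(m-j)+(n-k)}(AC + y^{k}A + y^{j}C) + y^{j+k}(BD + x^{m-j}D + x^{n-k}B).
\]
Each of the four extra summands is then absorbed into $AC$ or $BD$: for instance, $y^{k}A \subseteq AC$ because $y^{k}$ is the top $y$-power generator of $C$, and the three other inclusions $y^{j}C \subseteq AC$, $x^{m-j}D \subseteq BD$, $x^{n-k}B \subseteq BD$ follow by the same principle applied to the top generators of $A$, $B$, and $D$.

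The next step is a short auxiliary lemma, proved by a direct staircase calculation: the product of two $y$-tight ideals is again $y$-tight, and dually for $x$-tight. For $y$-tight $A = (x^{a_0}, x^{a_1}y, \ldots, y^{j})$ and $C = (x^{c_0}, x^{c_1}y, \ldots, y^{k})$ with strictly decreasing exponent sequences ending at $0$, setting $e_\ell := \min_{p+q=\ell}(a_p + c_q)$ for $\ell \in [0,j+k]$ gives a strictly decreasing sequence ending at $0$, and the monomials $\{x^{e_\ell}y^\ell\}_{\ell=0}^{j+k}$ are exactly the minimal generators of $AC$. Hence $AC$ is $y$-tight in $[0,j+k]$, and dually $BD$ is $x$-tight in $[0,(m-j)+(n-k)]$.

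Finally, the $yx$-tight structure of $IJ$ can be read off from the key identity. The summand $x^{(m-j)+(n-k)}AC$ has minimal generators with $y$-powers $0,1,\ldots,j+k$ and $x$-powers at least $(m-j)+(n-k)$, achieving equality only at the top $y$-power; the summand $y^{j+k}BD$ has minimal generators with $x$-powers $0,1,\ldots,(m-j)+(n-k)$ and $y$-powers at least $j+k$, achieving equality only at the top $x$-power. The two pieces share precisely the corner generator $x^{(m-j)+(n-k)}y^{j+k}$, and no non-corner generator of either piece divides any generator of the other. Concatenating, the minimal generators of $IJ$ assemble into a single $yx$-tight staircase of length $m+n+1$ with switch index $j+k$. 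The main obstacle is the absorption step: one must carefully verify that each of the four extra pieces is already contained in $AC$ or $BD$, which relies on the fact that the top $x$- and $y$-power generators of $A,B,C,D$ are pure powers of a single variable, a direct consequence of Remark~\ref{imp}.
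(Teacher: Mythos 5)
Your proof is correct and reaches the same endpoint through the same basic tools (Remark~\ref{imp} to split into $x$-tight and $y$-tight parts, Proposition~\ref{recall} for the mixed products), but it organizes the argument in a genuinely cleaner way than the paper. The paper splits into five cases depending on which of $I,J$ is $x$-tight, $y$-tight, or strictly $yx$-tight, and in the hardest case it stops at the regrouped form $IJ = x^{m-t+n-j}(PL+y^tL+y^jP)+y^{t+j}(x^{n-j}Q+x^{m-t}K+QK)$ and then argues separately that each parenthesized sum is tight. You notice the key absorption --- $y^kA, y^jC \subseteq AC$ and $x^{m-j}D, x^{n-k}B \subseteq BD$, because $y^k, y^j, x^{m-j}, x^{n-k}$ are (degenerate-case permitting) pure-power generators of $C,A,B,D$ respectively --- which collapses the identity to the uniform $IJ = x^{(m-j)+(n-k)}AC + y^{j+k}BD$ in all cases at once, with the convention that an absent tight part is the unit ideal. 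This buys you a single computation in place of five, and it makes the final ``read off the staircase'' step transparent: the $AC$ piece is $y$-tight, the $BD$ piece is $x$-tight, they share only the corner $x^{(m-j)+(n-k)}y^{j+k}$, and no other generator of one divides a generator of the other. You also replace the paper's citation to \cite{CNJR} for ``product of two $x$-tight (resp. $y$-tight) ideals is $x$-tight (resp. $y$-tight)'' with a self-contained min-convolution formula $e_\ell = \min_{p+q=\ell}(a_p+c_q)$; for the record, one should verify (as you do not spell out) that $e_\ell$ is strictly decreasing with $e_{j+k}=0$, which follows because from any minimizer $(p,q)$ for $\ell$ one can move to $(p+1,q)$ or $(p,q+1)$ and strictly decrease $a_p+c_q$. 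Both approaches are correct; yours is shorter and more systematic.
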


\begin{proof}
We divide the proof into the following four cases:

(i) If $I$ and $J$ are both either $x$-tight or $y$-tight, then the
statement follows from the details given in \cite[pg. 600]{CNJR}. In
particular, if $I$ is $x$-tight in $[0,m]$ and $J$ is $x$-tight in
$[0,n]$, then $IJ$ is $x$-tight in $[0,m+n]$.

(ii) Let $I$ be an $x$-tight ideal with $G(I)=\{x^m, x^{m-1}y^{b_1}, \ldots, y^{b_m}\}$ and $J$ be a $y$-tight
 ideal with $G(J)=\{x^{c_n}, x^{c_{n-1}}y, \ldots, y^n\}$.
 Thanks to Proposition~\ref{recall}, we have $IJ=x^mJ+y^nI$. In particular,
  \[G(x^mJ)=\{x^{c_n+m}, x^{c_{n-1}+m}y, \ldots, x^my^n\},\] and \[G(y^nI)=\{x^my^n, x^{m-1}y^{b_1+n}, \ldots, y^{b_m+n}\}.\]

From this, we conclude that $IJ$ is a $yx$-tight ideal.

\par
(iii) Let $I$ be an $x$-tight ideal with $G(I)=\{x^m, x^{m-1}y^{b_1}, \ldots, y^{b_m}\}$, and $J$ be a strict $yx$-tight ideal with
\[
G(J)=\{x^{c_n}, x^{c_{n-1}}y, \ldots,x^{n-j}y^j, x^{n-j-1}y^{d_{j+1}}, \ldots, y^{d_n}\}.
\]
Then by following the definition of $yx$-tight ideals, we can write $J=x^{n-j}L+y^{j}K$ such that $L$ is a $y$-tight ideal and $K$ is an $x$-tight ideal.
This gives $IJ=x^{n-j}IL+y^{j}IK$. Due to Proposition~\ref{recall}, we have
 \begin{eqnarray*}
IJ&=&x^{n-j}(y^jI+x^mL)+y^{j}IK\\
&=&x^{n-j}y^jI+x^{m+n-j}L+y^{j}IK\\
&=&y^j(x^{n-j}I+IK)+x^{m+n-j}L.
\end{eqnarray*}
The ideal $IK$ is $x$-tight in $[0,m+n-j]$ as discussed in part (i).
Putting $IK$ together with $x^{n-j}I$, we again get an $x$-tight
ideal in $[0,m+n-j]$. Note that $x^{m+n-j}y^j$ belongs to both $
y^j(x^{n-j}I+IK)$ and $x^{m+n-j}L$.
 Hence, $x^{m+n-j}L$ can be joined with $y^j(x^{n-j}I+IK)$, to obtain a $yx$-tight ideal, as required.

(iv)   Let $I$ be a $y$-tight ideal with $G(I)=\{x^{a_m}, x^{a_{m-1}}y, \ldots, x^{a_{1}}y^{t-1},y^t\}$, and $J$ be a strict $yx$-tight ideal with
\[
G(J)=\{x^{c_n}, x^{c_{n-1}}y, \ldots, x^{c_{n-(j-1)}}y^{j-1},x^{n-j}y^j, x^{n-j-1}y^{d_{j+1}}, \ldots, y^{d_n}\}.
\]
Then by following the definition of $yx$-tight ideals, we can write $J=x^{n-j}L+y^{j}K$ such that $L$ is a $y$-tight ideal and $K$ is an $x$-tight ideal.
This gives $IJ=x^{n-j}IL+y^{j}IK$. Due to Proposition~\ref{recall}, we have
 \begin{eqnarray*}
IJ&=&x^{n-j}IL+ y^{j} (x^{n-j}I+y^tK)\\
&=&x^{n-j}(IL+ y^{j}I)+ y^{j+t}K.
\end{eqnarray*}
 Following the similar argument as in (iii), it can be seen that
$IL+ y^{j}I$ is a $y$-tight ideal in $[0,j+t]$. Moreover, $x^{n-j}
y^{j+t}$ belongs to both $x^{n-j}(IL+ y^{j}I)$ and $ y^{j+t}K$. Hence
$IJ$ is a $yx$-tight ideal whose $x$-tight and $y$-tight parts are
joined at the monomial $x^{n-j} y^{j+t}$.

(v) Let $I$ and $J$ be strict $yx$-tight ideals with
\[
G(I)=\{x^{a_m}, x^{a_{m-1}}y, \ldots, x^{m-t}y^t, x^{m-t-1}y^{b_{t+1}}, \ldots, y^{b_m}\}
\]
and
\[
G(J)=\{x^{c_n}, x^{c_{n-1}}y, \ldots ,x^{n-j}y^j, x^{n-j-1}y^{d_{j+1}}, \ldots, y^{d_n}\}.
\]

We can write $I=x^{m-t}P+y^{t}Q$ and $J=x^{n-j}L+y^{j}K$ such that $P$ and $L$ are $y$-tight ideals, and $Q$ and $K$ are $x$-tight ideals. Then
 \begin{eqnarray*}
IJ&=&(x^{m-t}P+y^tQ)(x^{n-j}L+y^jK)\\
&=&x^{m-t+n-j}PL+x^{n-j}y^tQL+x^{m-t}y^jPK+y^{t+j}QK
\end{eqnarray*}
Using Proposition~\ref{recall}, we can write the above equality as
\[
IJ=x^{m-t+n-j}PL+x^{n-j}y^t(x^{m-t}L+y^{j}Q)+x^{m-t}y^j(x^{n-j}P+y^tK)+y^{t+j}QK
\]
Set
\[
M=x^{m-t+n-j}(PL+y^tL+y^jP)
\]
and
\[
N=y^{t+j}(x^{n-j}Q+x^{m-t}K+QK)
\]
Then $IJ=M+N$. Following the similar arguments as  in (iii) and (iv), we conclude that $PL+y^tL+y^jP$ is
 a $y$-tight ideal in $[0,t+j]$ and $x^{m-t+n-j}y^{t+j} \in M$. Also, $x^{n-j}Q+x^{m-t}K+QK$ is
 an $x$-tight ideal in $[0, m-t+n-j]$ and $x^{m-t+n-j}y^{t+j} \in N$. Hence, $IJ$ is a $yx$-tight
 ideal whose $x$-tight and $y$-tight parts are joined at the monomial $x^{m-t+n-j}y^{t+j} $.

\end{proof}

We conclude this section with the following corollary of
Theorem~\ref{tight}.

\begin{Corollary}\label{corpro}
Let $I$ and $J$ be componentwise polymatroidal ideals in $K[x,y]$. Then $IJ$ is also a componentwise polaymatroidal ideal.
\end{Corollary}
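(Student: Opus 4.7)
The plan is to reduce the statement directly to Theorem~\ref{tight} via the characterization of componentwise polymatroidal ideals given in Corollary~\ref{cortight}. The preparatory results have already packaged up precisely what is needed, so the corollary should follow by a short assembly argument.

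Concretely, I would first order $G(I)$ and $G(J)$ as in $(*)$ and apply Corollary~\ref{cortight} to each of them. This yields factorizations
\[
I = x^{\alpha_1} y^{\beta_1}\, I' \qquad \text{and} \qquad J = x^{\alpha_2} y^{\beta_2}\, J',
\]
where $I'$ and $J'$ are $yx$-tight ideals and $\alpha_i, \beta_i$ are the minimal $x$- and $y$-exponents appearing in $G(I)$ and $G(J)$, respectively. Multiplying gives
\[
IJ = x^{\alpha_1 + \alpha_2}\, y^{\beta_1 + \beta_2}\, (I' J').
\]
Theorem~\ref{tight} then tells us that $I' J'$ is itself a $yx$-tight ideal. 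Since $yx$-tight ideals are by definition $\mm$-primary (so in particular $I'J'$ contains a pure power of $x$ and a pure power of $y$ among its generators), the exponents $\alpha_1+\alpha_2$ and $\beta_1+\beta_2$ are exactly the minimal $x$- and $y$-degrees occurring in $G(IJ)$. Thus the displayed factorization of $IJ$ matches the form required by Corollary~\ref{cortight}, and applying the converse direction of that corollary shows $IJ$ is componentwise polymatroidal.

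There is no real obstacle here, since all the substantive work has been carried out in Theorem~\ref{tight}, whose multi-case analysis handles every combination of $x$-tight, $y$-tight, and strict $yx$-tight factors. The only bookkeeping required at this stage is to verify that the monomial prefix $x^{\alpha_1+\alpha_2} y^{\beta_1+\beta_2}$ in the expression for $IJ$ agrees with the minimal $x$- and $y$-exponents of $G(IJ)$, which is automatic from the $\mm$-primality of $I'J'$.
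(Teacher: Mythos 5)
Your proposal is exactly the argument the paper intends: factor out the monomial gcd via Corollary~\ref{cortight}, apply Theorem~\ref{tight} to the $yx$-tight parts, and invoke the converse direction of Corollary~\ref{cortight}. The paper states this in one line; you have simply spelled out the bookkeeping, correctly.
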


\begin{proof}
It follows immediately by Corollary~\ref{cortight}, and Theorem~\ref{tight}.
\end{proof}

\section{Componentwise polymatroidal with strong exchange property}\label{Sec3}
In this section, we give another class of componentwise
polymatroidal ideals with linear quotients. Indeed, we prove that
the componentwise polymatroidal ideals with strong exchange property
have linear quotients.

\begin{Definition}
Let $I\subset S$ be a polymatroidal ideal. We say that $I$ satisfies
the {\em strong exchange property} if  for all $u,v\in G(I)$ and for
all $i,j$ with $\deg_{x_i}(u)>\deg_{x_i}(v)$ and
$\deg_{x_j}(u)<\deg_{x_j}(v)$, one has $x_j(u/x_i)\in I$. A monomial ideal $I$ is called {\em componentwise  polymatroidal
with strong exchange property}, if each graded component of $I$ is
polymatroidal with strong exchange property.
\end{Definition}

Next, we recall the definition of ideals which are componentwise of
Veronese type.

 \begin{Definition}
Let $S=K[x_1,\ldots,x_n]$ and $d,a_1,\dots,a_n$ be positive
integers. We denote by $I_{(d;a_1,\ldots,a_n)}\subset S$, the
monomial ideal generated by all monomials $u\in S$ of degree $d$
satisfying $\deg_{x_i}(u)\leq a_i$ for all $i= 1,\ldots, n$. The
ideal $I_{(d;a_1,\ldots,a_n)}$ is called an  ideal of {\em Veronese
type}. A monomial ideal $I$ is called {\em componentwise of Veronese
type}, if each graded component of $I$ is of Veronese
type.
\end{Definition}

The ideals of Veronese type are polymatroidal and in \cite[Corollary
3.7]{BH}, it is shown that the ideals which are componentwise of
Veronese type, have linear quotients. The ideals of Veronese type
and the polymatroidal ideals with strong exchange property are
closely related. We describe their relation in the following remark
and we will use it repeatedly throughout the following text.

\begin{Remark}\label{rem}
{\em Let $I$ be a polymatroidal ideal with strong exchange property.
Then, the proof of Theorem 1.1 in \cite{HHV} shows that
$I=uI_{(d;a_1,\ldots,a_n)}$, where $u$ is the greatest common
divisor of the minimal generators of $I$.}
\end{Remark}

Remark \ref{rem} shows that the ideals of Veronese type have the
strong exchange property (set $u=1$). However, the converse of this
statement is not true. For example, let $I=x_1^2x_2I_{(4;1,3,3)}$.
Then $I$ is an ideal with
 strong exchange property, but it is not of Veronese type. To see this,
  note that if $I$ is of Veronese type, then it must be $I_{(7;3,4,3)}$.
  However, $x_2^4x_3^3\in I_{(7;3,4,3)}$ but $x_2^4x_3^3\not\in G(I)$. Therefore, it shows that  $I$ is not of Veronese type.

Our main aim in this section is to show that the componentwise
polymatroidal ideals with strong exchange property have linear
quotients.
 To achieve this aim, we will use the following concept.

\begin{Definition}
Let $I\subseteq J$ be monomial ideals with $G(I)\subseteq G(J)$. We
say that $I$ can be {\em extended by linear quotients} to $J$ if the
set $G(J)\setminus G(I)$ can be ordered $v_1,\ldots, v_m$ such that
$(G(I), v_1,\ldots, v_i): v_{i+1}$ is generated by variables for $i=
0,\ldots,m-1$. In particular, if a monomial ideal $L$ has linear
quotients, $(0)$ can be extended by linear quotients to $L$.
\end{Definition}

The following remark is a direct  consequence of above definition.

\begin{Remark}\label{extend}
\begin{enumerate}
{\em \item[(i)] Let $I\subseteq J\subseteq K$ be monomial ideals
with $G(I)\subseteq G(J)\subseteq G(K)$. If $I$ can be extended by
linear quotients to $J$ and $J$ can be extended by linear quotients
to $K$, then clearly $I$ can be extended by linear quotients to $K$.
\item[(ii)] Let $I\subseteq J $ be monomial ideals with
$G(I)\subseteq G(J)$, and $u$ be a monomial.  If $I$ can be extended
by linear quotients to $J$, then $uI$ can be extended by linear
quotients to $uJ$.}
\end{enumerate}
\end{Remark}

To prove the main result of this section, we first prove the following two results.

\begin{Lemma} \label{var}
The ideal $x_1^{c_1}\cdots x_n^{c_n}I_{(d;a_1,\ldots,a_n)}$  can be
extended by linear quotients to
$I_{(d+\sum_{i=1}^nc_i;a_1+c_1,\ldots,a_n+c_n)}$, where $c_i\geq 0$
for any $i\in[n]$.
\end{Lemma}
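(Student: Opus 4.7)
The plan is to order the new generators $G(J) \setminus G(I')$ by a numerical invariant measuring how far each monomial is from being a generator of $I'$. Write $D := d + \sum_i c_i$; both ideals are generated in degree $D$, and a monomial $v = x_1^{b_1} \cdots x_n^{b_n} \in G(J)$ lies in $G(I')$ exactly when $b_i \ge c_i$ for every $i$. I would define the deficiency $f(v) := \sum_i \max(c_i - b_i, 0)$ and order $G(J) \setminus G(I') = \{v_1, \ldots, v_m\}$ by strictly increasing $f$, breaking ties by pure lex (lex-larger first, using $x_1 > \cdots > x_n$).

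The first step is an exchange lemma: for any $v = x^b \in G(J) \setminus G(I')$ and any $i$ with $b_i < c_i$, the variable $x_i$ lies in the colon of $v$ against all previous generators. Indeed, $\sum_j b_j = D \ge \sum_j c_j$ produces some $j$ with $b_j > c_j$; then $w := x_i v / x_j$ lies in $G(J)$ with $f(w) = f(v) - 1$, so $w$ precedes $v$, and $x_i v = x_j w$ gives $x_i \in (w) : v$.

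The main step is to show, for every $u = x^e$ preceding $v$, that $\operatorname{lcm}(u, v)/v$ is divisible by some variable already shown to lie in the colon. Set $A := \{k : e_k > b_k\}$. The easy case is when some $k \in A$ satisfies $b_k < c_k$: the exchange lemma places $x_k$ in the colon, and $x_k$ divides $\operatorname{lcm}(u, v)/v$. The delicate case is when $b_k \ge c_k$ for every $k \in A$. A short contribution-by-contribution comparison of $f(u)$ and $f(v)$ shows that in this case $f(u) \ge f(v)$, with equality forcing $e_l \ge c_l$ for every $l \in B := \{l : e_l < b_l\}$; since $u$ precedes $v$, equality must hold, so $u >_{\lex} v$. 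Setting $k_0 := \min A$, lex-precedence yields $e_k = b_k$ for all $k < k_0$, hence $B \subseteq \{k > k_0\}$. For any $l \in B$ the monomial $w := x_{k_0} v / x_l$ lies in $G(J)$, has $f(w) = f(v)$ (the swap exchanges two non-deficient positions of $v$), and $w >_{\lex} v$ (the first differing coordinate is $k_0$, where $w$ is larger), so $w$ precedes $v$. Then $x_{k_0} v = x_l w$ supplies $x_{k_0} \in (w) : v$, the needed variable with $k_0 \in A$.

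The main obstacle is precisely this second case, where $u$ and $v$ share their deficient positions: the deficient-index variables of $v$ alone do not divide $\operatorname{lcm}(u, v)/v$, so the exchange lemma is not enough. The lex tie-breaking combined with the auxiliary swap $w = x_{\min A} v / x_l$ with $l \in B$ is the crucial device that introduces a new variable $x_{\min A}$ into the colon. Once both steps are in place, every colon $(G(I'), v_1, \ldots, v_{i-1}) : v_i$ has each generator divisible by a variable in the colon, and so is generated by variables.
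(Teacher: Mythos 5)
Your proof is correct, and it takes a genuinely different route from the paper's. The paper proceeds by reduction: using Remark~\ref{extend}, it suffices to show that $x_i I_{(d;a_1,\ldots,a_n)}$ can be extended by linear quotients to $I_{(d+1;a_1,\ldots,a_i+1,\ldots,a_n)}$ for a single $i$, and this single-step case is settled by citing \cite[Proposition 2.2]{BR}, which gives linear quotients for the larger ideal with respect to a lexicographic order in which $x_i$ is the largest variable; one then just observes that the generators divisible by $x_i$ (i.e.\ the generators of $x_i I_{(d;a_1,\ldots,a_n)}$) form an initial segment of that order. Your argument, by contrast, is direct and self-contained: you give a single explicit ordering of $G(J)\setminus G(I')$ by the deficiency $f(v)=\sum_i\max(c_i-b_i,0)$ with lex tie-breaking, prove an exchange lemma that puts every deficient variable of $v$ into the colon, and then in the remaining (``delicate'') case use the equality $f(u)=f(v)$ forced by the ordering, the lex comparison, and the swap $w=x_{\min A}v/x_l$ with $l\in B$ to produce the needed witness. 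Your route costs more bookkeeping but avoids any external citation and handles all the $c_i$'s simultaneously rather than one unit increment at a time, whereas the paper's route is shorter but leans on \cite{BR} and an iterated reduction. Both are valid; they illuminate different aspects of the structure (the paper exploits the lex admissible order on Veronese-type ideals, you exploit a deficiency stratification).
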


\begin{proof}
Following Remark~\ref{extend} (i) and (ii), it is enough to show
that for each $i\in[n]$, $K=x_iI_{(d;a_1,\ldots,a_n)}$ can be
extended by linear quotients to
$L=I_{(d+1;a_1,\ldots,a_i+1\ldots,a_n)}$. From \cite[Proposition
2.2]{BR}, we know that $L$ has linear quotients with respect to the
lexicographical order of the minimal generators induced by $x_i>x_j$ for each $j\neq i$. On the other hand, for all
monomials $u\in G(L)$ for which $x_i\mid u$, we have $u\in G(K)$.
Therefore the elements of $G(K)$ form the initial part of the admissible
order of $G(L)$, as required.
\end{proof}

\begin{Proposition}\label{veronese}
 Let $J=I_{(d;a_1,\ldots,a_n)}$ and $L=I_{(d;b_1,\ldots,b_n)}$ with $J\subseteq L$. Then $J$ can be extended by linear quotients to $L$.
\end{Proposition}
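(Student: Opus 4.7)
After normalizing the bounds so that $a_i\leq d$ and $b_i\leq d$ for all $i$ (which does not change the ideals), the hypothesis $J\subseteq L$ forces $a_i\leq b_i$: any generator of $J$ with $\deg_{x_i}=a_i$ must also lie in $L$. The plan is to induct on $\Delta:=\sum_{i=1}^n(b_i-a_i)$. When $\Delta=0$ there is nothing to prove. When $\Delta>0$, choose some $k$ with $a_k<b_k$, set $J'=I_{(d;a_1,\ldots,a_k+1,\ldots,a_n)}$, and observe $J\subseteq J'\subseteq L$. By the inductive hypothesis $J'$ can be extended by linear quotients to $L$, so Remark~\ref{extend}(i) reduces the whole problem to the one-step case: after relabelling, show that $J=I_{(d;a_1,\ldots,a_n)}$ can be extended by linear quotients to $J'=I_{(d;a_1+1,a_2,\ldots,a_n)}$.

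In this one-step case the new generators are exactly
\[
G(J')\setminus G(J)\;=\;x_1^{a_1+1}\cdot G(I''),\qquad I'':=I_{(d-a_1-1;\,a_2,\ldots,a_n)}\subset K[x_2,\ldots,x_n]
\]
(and there is nothing to add if $I''=0$). Since $I''$ is polymatroidal it admits an admissible order $v_1,\ldots,v_m$. I would fix any admissible order of $G(J)$ (which exists because $J$ is polymatroidal) and then append the monomials $u_s:=x_1^{a_1+1}v_s$ in the inherited order; the claim is that the resulting list is an admissible order for $G(J')$.

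To verify admissibility at $u_s$, I need to produce, for every earlier element, a variable in $(G(J),u_1,\ldots,u_{s-1}):u_s$ that divides its quotient by $\gcd$ with $u_s$. For $w\in G(J)$: since $\deg w=\deg u_s$ and $w\neq u_s$, some coordinate $j$ satisfies $\deg_{x_j}(w)>\deg_{x_j}(u_s)$; the bound $\deg_{x_1}(w)\leq a_1<a_1+1$ excludes $j=1$, so $j\geq 2$, and then $\deg_{x_j}(v_s)<\deg_{x_j}(w)\leq a_j$ automatically. The monomial $w':=x_1^{a_1}x_jv_s$ then belongs to $G(J)$ (routine degree and bound checks) and divides $x_ju_s$, which gives $x_j\in J:u_s$; since $x_j$ also divides $w/\gcd(w,u_s)$, this case is settled. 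For an earlier $u_\ell$: the quotient $u_\ell/\gcd(u_\ell,u_s)=v_\ell/\gcd(v_\ell,v_s)$ is divisible, by admissibility of $v_1,\ldots,v_m$, by some variable $x_t\in(v_1,\ldots,v_{s-1}):v_s$, and multiplying the witnessing relation by $x_1^{a_1+1}$ transfers $x_t$ into $(u_1,\ldots,u_{s-1}):u_s$.

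The main obstacle is the step involving $w\in G(J)$: from the purely combinatorial information that $w$ disagrees with $u_s$ in some coordinate $j\geq 2$, one must extract a concrete generator of $J$ whose colon with $u_s$ is precisely the variable $x_j$. The choice $w'=x_1^{a_1}x_jv_s$, together with the automatic inequality $\deg_{x_j}(v_s)<a_j$, is what makes the argument close; once this exchange step is in place, the remaining bookkeeping is handled by the admissibility of the order on $I''$ and by Remark~\ref{extend}.
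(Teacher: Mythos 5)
Your proposal is correct and follows essentially the same route as the paper: both reduce via Remark~\ref{extend}(i) to the one-step case of raising a single bound $a_s$ by one, and both verify admissibility at each new generator by exhibiting an exchange monomial of the form $x_1^{a_1}x_j v_s$ (which is exactly the paper's $w=(v/x_s)x_r$). The only difference in flavor is that the paper orders the new generators lexicographically and checks their mutual colons directly in its Case~2, whereas you transport an arbitrary admissible order of the lower-dimensional Veronese ideal $I''$ through multiplication by $x_1^{a_1+1}$ --- a mildly more structural packaging of the same core argument.
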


\begin{proof}
Let $K=I_{(d;a_1,\ldots, a_{s-1},a_s+1,a_{s+1},\ldots a_n)}$ for
some $s\in[n]$. Due to Remark \ref{extend} (i), it is enough to show
that $J$ can be extended by linear quotients to $K$. We order the
elements of $G(K)\setminus G(J)$ lexicographically. Moreover, for
all $u\in G(J)$ and  for all $v\in G(K)\setminus G(J)$, we set
$u>v$. We claim that, with this order, $J$ can be extended by linear
quotients to $K$. To prove the claim, we need to show that for all
$u\in G(K)$ and
 for all $v\in G(K)\setminus G(J)$, there exists a monomial  $w \in G(K)$ with
 $w>_{lex}v$  such that $(w):v=(x_r)$ and $x_r$ divides the generator of $(u):v$.

Case 1: Let $u=x_1^{h_1}\cdots x_n^{h_n}\in G(J)$ and
$v=x_1^{t_1}\cdots x_n^{t_n}\in G(K)\setminus G(J)$. Since $v\in
G(K)\setminus G(J)$, it follows that $t_s=a_s+1$, and hence
$t_s>h_s$. On the other hand, due to $\deg(u)=\deg(v)=d$, we have
that there exists some $r\in[n]$ such that $h_r>t_r$. Let $w=(v/x_s)x_r$.
Then $w\in G(J)$ and $(w):v=(x_r)$.

Case 2: Let $u,v\in G(K)\setminus G(J)$ and $u>_{lex}v$. Since $u,v\in
G(K)\setminus G(J)$, it follows that $h_s=t_s=a_s+1$. On the other
hand, due to $u>_{lex}v$, we know that there exist $l,r$ such that $r<l$,
$t_r<h_r$ and $h_l<t_l$. Let $w=(v/x_l)x_r$, then $(w):v=(x_r)$ and
$w\in G(K)\setminus G(J)$ with $w>_{lex}v$, as required.
\end{proof}

Now, we state the main result of this section.

 \begin{Theorem}\label{main}
 Let $I$ be a componentwise polymatroidal with strong exchange property. Then $I$ has linear quotients.
 \end{Theorem}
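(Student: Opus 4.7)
By Remark~\ref{rem}, each graded component $I_{\langle j \rangle}$ equals $v_j I_{(e_j; a_{j,1}, \ldots, a_{j,n})}$, where $v_j = \gcd(G(I_{\langle j \rangle}))$ and $e_j = j - \deg(v_j)$. Let $d_1 < d_2 < \cdots < d_k$ be the degrees in which $I$ has minimal generators, and for $0 \leq i \leq k$ let $I^{(i)}$ be the ideal generated by all minimal generators of $I$ of degree at most $d_i$, so that $I^{(0)} = (0)$, $I^{(k)} = I$, and $G(I^{(i)}) \setminus G(I^{(i-1)}) = G(I) \cap S_{d_i}$.

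In view of Remark~\ref{extend}(i), the theorem reduces to showing that $I^{(i-1)}$ can be extended by linear quotients to $I^{(i)}$ for each $i = 1, \ldots, k$. The case $i = 1$ is immediate, since $I^{(1)} = I_{\langle d_1 \rangle}$ is polymatroidal and thus has linear quotients by \cite[Lemma 1.3]{HT}.

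For the general step $i > 1$, every new generator is of the form $u = v_i w$ with $w \in G(I_{(e_i; a_i)})$, because $G(I) \cap S_{d_i} \subseteq G(I_{\langle d_i \rangle}) = v_i \cdot G(I_{(e_i; a_i)})$. The plan is to order the new generators $G(I) \cap S_{d_i}$ by the pure lexicographic order induced by $x_1 > x_2 > \cdots > x_n$, mirroring the orderings used in Lemma~\ref{var} and Proposition~\ref{veronese}. To verify that this ordering gives an extension by linear quotients, one must show that for each new generator $u$ and each prior generator $g$, the colon $g : u$ is divisible by some variable $x_\ell$ with $x_\ell u$ lying in the ideal generated by the generators preceding $u$.

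Two cases arise. When $g = v_i w'$ is a new generator in degree $d_i$ lex-preceding $u$, the verification reduces, via the Veronese-type structure of $I_{(e_i; a_i)}$ together with the strong exchange property of $I_{\langle d_i \rangle}$, to the argument used in the proof of Proposition~\ref{veronese}. When $g \in G(I^{(i-1)})$ has lower degree $d_s < d_i$, one lifts $g$ to an element $gm \in G(I_{\langle d_i \rangle})$ by choosing a monomial $m$ of degree $d_i - d_s$ with $\deg_{x_{\ell'}}(gm) < \deg_{x_{\ell'}}(u)$ for some $\ell'$ (such $\ell'$ exists because $\deg(u) > \deg(g)$, and $gm \in I_{\langle d_i \rangle}$ since $\mm^{d_i - d_s} I_{\langle d_s \rangle} \subseteq I_{\langle d_i \rangle}$), and then applies strong exchange of $I_{\langle d_i \rangle}$ to the pair $(u, gm)$ to produce an index $\ell$ with $\deg_{x_\ell}(g) > \deg_{x_\ell}(u)$ such that $x_\ell u / x_{\ell'} \in I_{\langle d_i \rangle}$. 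The chief technical obstacle is verifying that this exchanged element actually lies in the ideal generated by earlier generators --- either $x_\ell u / x_{\ell'} \in I^{(i-1)}$, or $x_\ell u / x_{\ell'}$ is a new generator of $I$ lex-preceding $u$ --- which requires a careful choice of $m$, $\ell$, and $\ell'$ in terms of the degree profiles of $u$ and $g$.
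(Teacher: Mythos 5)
Your proposal takes a genuinely different route from the paper, and it contains a real gap that it acknowledges but does not close.

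The paper does not filter by the degrees of minimal generators. Instead it invokes \cite[Proposition 2.9]{SZ}, which reduces the problem to showing that $\mm I_{\langle j\rangle}$ can be extended by linear quotients to $I_{\langle j+1\rangle}$ for every $j$. It then builds an explicit chain
\[
\mm I_{\langle j\rangle}=u\,\mm I_{(d;a_1,\dots,a_n)}\ \longrightarrow\ uI_{(d+1;a_1+1,\dots,a_n+1)}\ \longrightarrow\ \cdots\ \longrightarrow\ vI_{(l;b_1,\dots,b_n)}=I_{\langle j+1\rangle}
\]
using \cite[Theorem 3.6]{BH}, Lemma~\ref{var}, and Proposition~\ref{veronese}; strong exchange enters only through Remark~\ref{rem}, which gives the Veronese-type description $I_{\langle j\rangle}=uI_{(d;a_1,\dots,a_n)}$. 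The whole argument stays inside the world of Veronese-type ideals, where the extensions are verified once and for all in those lemmas. Your plan instead filters $I$ by the sub-ideals $I^{(i)}$ generated by the minimal generators of degree at most $d_i$, orders each new batch lexicographically, and tries to verify the colon condition by applying strong exchange directly. That is a legitimate alternative skeleton, and your reduction to showing that $I^{(i-1)}$ extends by linear quotients to $I^{(i)}$ is sound, but it is not the paper's reduction and it requires you to re-prove the extension step by hand rather than quoting it.

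The gap is precisely the step you flag at the end. Given a lower-degree generator $g$ and a new generator $u$ of degree $d_i$, you lift $g$ to $gm\in G(I_{\langle d_i\rangle})$ and apply strong exchange to obtain some $x_\ell u/x_{\ell'}\in I_{\langle d_i\rangle}$. This only tells you that $x_\ell u/x_{\ell'}$ is a degree-$d_i$ monomial lying in $I$; you still must show that it is divisible by a generator that precedes $u$ in your order, i.e.\ that it lies in $I^{(i-1)}$ or equals a degree-$d_i$ minimal generator lex-preceding $u$. For the second alternative you need $\ell<\ell'$, but you also need $\deg_{x_\ell}(g)>\deg_{x_\ell}(u)$ (not merely $\deg_{x_\ell}(gm)>\deg_{x_\ell}(u)$) so that $x_\ell$ actually divides $g:u$, and you need some $\ell'>\ell$ with $\deg_{x_{\ell'}}(gm)<\deg_{x_{\ell'}}(u)$. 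There is no reason for all three constraints to be simultaneously satisfiable: it can happen that every index $\ell$ with $\deg_{x_\ell}(g)>\deg_{x_\ell}(u)$ lies to the right of every index $\ell'$ with $\deg_{x_{\ell'}}(gm)<\deg_{x_{\ell'}}(u)$, in which case the exchanged element lex-follows $u$ and you would have to prove it lies in $I^{(i-1)}$, which your sketch gives no mechanism for. The same-degree case has an analogous subtlety: Proposition~\ref{veronese} compares two full Veronese-type ideals, whereas here the new generators of degree $d_i$ are only a \emph{subset} of $G(I_{\langle d_i\rangle})$, namely those not divisible by a lower-degree generator, so the exchanged element produced there need not be one of your new generators. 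Until these verifications are carried out, the proposal is an outline rather than a proof.
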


\begin{proof}
By \cite[Proposition 2.9]{SZ}, it is enough to show that $\mm
I_{\langle j \rangle}$ can be extended by linear quotients to
$I_{\langle j+1 \rangle}$, for all $j$. Let $I_{\langle j
\rangle}=uI_{(d;a_1,\ldots,a_n)}$, where $u=x_1^{u_1}\cdots
x_n^{u_n}$ is the greatest common divisor of the minimal generators
of $I_{\langle j \rangle}$ and let $I_{\langle j+1
\rangle}=vI_{(l;b_1,\ldots,b_n)}$, where $v=x_1^{v_1}\cdots
x_n^{v_n}$ is the greatest common divisor of the minimal generators
of $I_{\langle j+1 \rangle}$. Since $\mm I_{\langle j \rangle}
\subseteq I_{\langle j+1 \rangle}$, it follows that $G(\mm
I_{\langle j \rangle})\subseteq G(I_{\langle j+1 \rangle})$. Fix an
integer $i\in [n]$. Since  $u$ is the greatest common divisor of the
minimal generators of $I_{\langle j \rangle}$,  there exists $w\in
G(I_{(d;a_1,\ldots,a_n)})$ such that $x_i\nmid w$. Since $uwx_r\in
G(\mm I_{\langle j \rangle})$, with $r\neq i$, it follows that there
exists some $w'\in G(I_{(l;b_1,\ldots,b_n)})$ such that $uwx_r=vw'$.
Hence $u_i=\deg_{x_i}(uwx_r)=\deg_{x_i}(vw')\geq v_i$, and hence
$u_i\geq v_i$. We may assume that $u_1=v_1+c_1,\ldots,u_s=v_s+c_s$,
and $u_{s+1}=v_{s+1},\ldots,u_n=v_n$
 for some $s\in[n]$.
By \cite[Theorem 3.6]{BH} and Remark \ref{extend} (ii),
$\mm I_{\langle j \rangle}=u\mm I_{(d;a_1,\ldots,a_n)}$ can be extended by linear quotients to
$uI_{(d+1;a_1+1,\ldots,a_n+1)}$. On the other
hand, using Lemma \ref{var}, we have that
$$uI_{(d+1;a_1+1,\ldots,a_n+1)}=vx_1^{c_1}\cdots
x_s^{c_s}I_{(d+1;a_1+1,\ldots,a_n+1)}$$
 can be extended by linear quotients to
$$vI_{(d+\sum_{i=1}^sc_i+1;a_1+c_1+1,\ldots,a_s+c_s+1,a_{s+1}+1,\ldots,a_n+1)}.$$ Since $\mm I_{\langle j \rangle} \subseteq I_{\langle j+1 \rangle}$, it
follows that
$$a_1+c_1+1\leq b_1,\ldots,a_s+c_s+1\leq b_s\;,\;a_{s+1}+1\leq b_{s+1},\ldots,a_n+1\leq b_n.$$
Also, we have
$$d+\sum_{i=1}^sc_i+1=d+\sum_{i=1}^nu_i-\sum_{i=1}^nv_i+1=j-\sum_{i=1}^nv_i+1=l.$$
Then, it follows from Proposition \ref{veronese} and Remark
\ref{extend} (ii) that
$$vI_{(d+\sum_{i=1}^sc_i+1;a_1+c_1+1,\ldots,a_s+c_s+1,a_{s+1}+1,\ldots,a_n+1)}$$
 can be extended by linear quotients to $vI_{(l;b_1,\ldots,b_n)}=I_{\langle j+1 \rangle}$. Therefore, the desired result follows by Remark \ref{extend} (i).
\end{proof}

We conclude this section with the following notes. If $I$ is a
componentwise polymatroidal ideal with strong exchange property,
then powers of $I$ need not to be componentwise polymatroidal. The
ideal $I=(x_1^2, x_2^2x_3, x_1x_2x_3, x_1x_2^2, x_1x_3^3, x_2x_3^3)$
is componentwise polymatroidal with strong exchange property, while
$I^2$ is not componentwise polymatroidal, see \cite[Example
3.3]{BH}. In fact, in a very special case, when we take the product
of the maximal ideal with an ideal of Veronese type,
 the resulting ideal need not to have strong exchange property. For
 example, consider
 the ideal $I=\mm I_{(6;3,2,1,4)}$. The maximal ideal $\mm$ and  $ I_{(6;3,2,1,4)}$ admit the strong exchange property. However, $I$
 does not have strong exchange property. To see this, note that $x_1^4x_2^2x_3,x_2^3x_4^4\in G(I)$, but $x_1^4x_2^3\notin G(I)$.

\end{document}